\newtheorem{lemma}{Lemma}
\newtheorem{theorem}{Theorem}
\newtheorem{corollary}{Corollary}
\newtheorem{proposition}{Proposition}
\newcommand{\FF}{\mathbb{F}}
\begin{document}
\renewcommand{\refname}{References}
\renewcommand{\proofname}{Proof.}
\renewcommand{\figurename}{Fig.}

\thispagestyle{empty}

\title[$L_{\infty}$ norm minimization for eigenvectors of the block graphs of STSs]{$L_{\infty}$ norm minimization for nowhere-zero integer eigenvectors of the block graphs of Steiner triple systems and Johnson graphs}
\author{{E.A. Bespalov, I.Yu. Mogilnykh, K.V. Vorob'ev}}%
\address{Evgeny Andreevich Bespalov
\newline\hphantom{iii} Sobolev Institute of Mathematics,
\newline\hphantom{iii} pr. Koptyuga, 4,
\newline\hphantom{iii} 630090, Novosibirsk, Russia}%
\email{bespalovpes@mail.ru}%
\address{Ivan Yurevich Mogilnykh
\newline\hphantom{iii} Sobolev Institute of Mathematics,
\newline\hphantom{iii} pr. Koptyuga, 4,
\newline\hphantom{iii} 630090, Novosibirsk, Russia}%
\email{ivmog84@gmail.com}%

\address{Konstantin Vasil'evich Vorob'ev
\newline\hphantom{iii} Sobolev Institute of Mathematics,
\newline\hphantom{iii} pr. Koptyuga, 4,
\newline\hphantom{iii} 630090, Novosibirsk, Russia}%
\email{konstantin.vorobev@gmail.com}%

\thanks{\sc Bespalov, E.A., Mogilnykh, I.Yu., Vorob'ev K.V.
}
\thanks{\copyright \ 2023 Bespalov, E.A., Mogilnykh, I.Yu., Vorob'ev K.V.}
\thanks{\rm This work was funded by the Russian Science
Foundation under grant 22-21-00135, https://rscf.ru/project/22-21-00135/}

 \vspace{1cm}
\maketitle {\small
\begin{quote}
\noindent{\sc Abstract. }  We study nowhere-zero integer eigenvectors of the block graphs of Steiner triple systems and the Johnson graphs. For the first eigenvalue we obtain the minimums
of the $L_{\infty}$ norm for several infinite series of Johnson graphs, including $J(n,3)$ for all $n\geq 63$, as well as general upper and lower bounds. 
 The minimization of the $L_{\infty}$ norm for nowhere-zero integer eigenvec\-tors  with the second eigenvalue of the block graph of a Steiner triple system $S$ is equivalent to finding the minimum nowhere-zero flow for Steiner triple system $S$. For the all Assmuss-Mattson Steiner 
triple systems of the orders greater or equal to $99$ we prove that the minimum flow is bounded above by $5$. 

\medskip

\noindent{\bf Keywords:}  Steiner triple system, flow, strongly regular graph, Johnson graph, Grassmann graph, eigenvalue
 \end{quote}
}

\section{Introduction}

A vector is called {\it nowhere-zero integer} (shortly NZI vector) if all of its elements are nonzero integers.
The infinity norm $\| v \|_{\infty}$ of a vector $v$ is defined  as the maximum of the absolute values of its elements.

Let $W_S$ be the point-block incidence matrix of a Steiner triple system  $S$. A nowhere-zero integer vector $u$ such that $W_Su=0$ is called a nowhere-zero  $(\|u\|_{\infty}+1)$-{\it flow} for the Steiner triple system $S$ \cite{Akbari}. 
 It is not hard to see that the right nullspace of the incidence matrix $W_S$ coincides with the second eigenspace of the block graph of $S$ (see Proposition \ref{2eigchar} below).  

The minimum of the $L_{\infty}$ norm for flows of Steiner triple systems were considered in works \cite{Akbari}, \cite{Akbari2013}, \cite{AMMW}. Akbari, Burgess, Danziger and  Mendelsohn \cite{Akbari2017} showed that the minimum of the norm of the flows in any Steiner triple system of order $n$ is upper bounded by $O(n^2)$.

 On the other hand, studies show that for particular families of Steiner triple systems, the actual minimum of the norm of the flows is much smaller than $O(n^2)$ and this fact finds similarities in a conjecture of Tutte on existence of a $5$-flow for  the graphs \cite{Tutte}.  Speaking more precisely, all Steiner triple systems of order $15$ have $3$-flows \cite{Akbari}. Furthermore, it was proven that some well-known recursive classes of Steiner triple systems, such as direct product construction,  $2v+7$-construction admit a $3$-flow, given a $3$-flow in the original Steiner triple system \cite{AMMW}.
 As for the $2v+1$-construction, the resulting Steiner triple systems has $3$, $4$ or $5$-flow \cite{Akbari2017}. In Section 5 we establish that the Assmuss-Mattson \cite{AM} Steiner triple system obtained from any Steiner triple system $S$ of order at least $49$ has a $5$-flow, regardless of the flow in the original  system $S$. 

One might consider a more general definition of a flow for any given natural matrix $W$, which is, for example, the inclusion matrix of subsets \cite{Akbari} or subspaces \cite{SS}. A flow in  these cases is the sum of eigenvectors of a Johnson or Grassmann graph with specific eigenvalues. For example, let $W$ be the inclusion matrix of $2$-subsets and $k$-subsets of $n$-element set. In \cite{Akbari} it was shown that 
for $k=3$, there is a  nowhere-zero integer vector $v$ such that $Wv=0$, $\|v\|_{\infty}=2$, i.e. a generalized $3$-flow. Note that for $k=3$ any nonzero vector $v$ such that $Wv=0$ is an eigenvector of the Johnson graph $J(n,3)$ with the third eigenvalue. Relying on the properties of higher order inclusion matrices, the authors of \cite{Akbari} extended the result and showed that a $3$-flow exists for the inclusion matrix of $2$-subsets and $k$-subsets for any $k\geq 3$.

The perspective of the continuing studies of flows for Steiner triple systems and structural matrices implies the following natural question.

{\bf Problem 1.} Given a distance-regular graph $\Gamma$ of diameter $d$ and its rational eigenvalue $\theta_i(\Gamma),$ $0\leq i\leq d$,
find $$min\{\|u\|_{\infty}{\it+1}:u \mbox{ is a nowhere-zero integer } \theta_i(\Gamma)\mbox{-eigenvector of }\Gamma\},$$ which we denote as $m(i,\Gamma)$ in below.

From results of Akbari et al. there is always a solution for Problem 1.

\begin{theorem}  \cite{Akbari}, \cite{Akbari2013} 
Let $\Gamma$ be a distance-regular graph with a rational eigenvalue $\theta_i(\Gamma)$, $0\leq i\leq d$. Then there is a NZI $\theta_i(\Gamma)$-eigenvector of $\Gamma$.
\end{theorem}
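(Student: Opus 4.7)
The plan is to combine three standard ingredients: integrality of rational eigenvalues of integer matrices, the Bose--Mesner algebra structure of distance-regular graphs, and a finite-union-of-hyperplanes avoidance argument.

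First I would note that since the adjacency matrix $A$ of $\Gamma$ has integer entries, its characteristic polynomial is monic with integer coefficients, so by the rational root theorem any rational eigenvalue $\theta_i$ is automatically an integer. Consequently $A - \theta_i I$ is an integer matrix, and its kernel $V_i = \ker(A - \theta_i I)$ admits an integer basis $v_1, \ldots, v_{m_i}$ (clear denominators in any rational basis).

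The key step is showing that for every vertex $x$ some vector of $V_i$ is nonzero at $x$. Here distance-regularity enters through the primitive idempotent $E_i$, i.e.\ the orthogonal projection of $\mathbb{R}^{|V|}$ onto $V_i$, which lies in the Bose--Mesner algebra spanned by the distance matrices $A_0 = I, A_1, \ldots, A_d$. Since $A_j$ has zero diagonal for $j \geq 1$, the diagonal of $E_i$ is constant, and taking trace forces it to equal $\mathrm{tr}(E_i)/|V| = m_i/|V| > 0$. As $E_i$ is an orthogonal projection we have $(E_i)_{xx} = \|E_i e_x\|^2$, so $E_i e_x \in V_i$ has $x$-coordinate equal to $m_i/|V| \neq 0$.

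Finally, hyperplane avoidance finishes the argument: for each vertex $x$ the tuples $(c_1, \ldots, c_{m_i}) \in \mathbb{Q}^{m_i}$ satisfying $\sum_j c_j v_j(x) = 0$ form the kernel of the linear functional $c \mapsto \sum_j c_j v_j(x)$, which is nonzero by the previous step (else every vector of $V_i$ would vanish at $x$), hence a proper subspace of $\mathbb{Q}^{m_i}$. A finite union of $|V|$ proper subspaces cannot exhaust $\mathbb{Q}^{m_i}$; any rational tuple outside this union gives $v = \sum_j c_j v_j \in V_i$ with no zero coordinate, and clearing denominators produces an integer NZI eigenvector. The only step carrying real content is the middle one, and the main conceptual point is identifying the constant positive diagonal of the primitive idempotent $E_i$ as the certificate that no coordinate functional annihilates $V_i$; the remaining steps are routine linear algebra.
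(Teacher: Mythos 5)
Your proof is correct, and it is more self-contained than the paper's, which disposes of the statement entirely by citation: the authors invoke \cite[Theorem 3]{Akbari2013} for the existence of a nowhere-zero \emph{real} eigenvector of any distance-regular graph, and \cite[Lemma 3.3]{Akbari} to convert a nowhere-zero real vector in the null space of a rational matrix into a nowhere-zero \emph{integer} one, applied to $A_\Gamma-\theta_i I$. What you have done is unpack and merge those two ingredients into a single direct argument: the constant positive diagonal of the primitive idempotent $E_i$ (the actual content behind the first citation) certifies that no coordinate functional annihilates the eigenspace, and you then run the avoidance argument directly over $\mathbb{Q}^{m_i}$ rather than over $\mathbb{R}$, so that clearing denominators immediately yields an integer vector and the separate real-to-integer transfer lemma becomes unnecessary. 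All the individual steps are sound: rational eigenvalues of an integer matrix are integers, the rational kernel of $A-\theta_i I$ has the same dimension as the real kernel (rank is field-independent), so if every rational basis vector vanished at $x$ then so would $E_ie_x$, contradicting $(E_i)_{xx}=m_i/|V|>0$; and a finite union of proper subspaces cannot cover a vector space over an infinite field. The one point worth stating explicitly is that last dimension-count, since $E_ie_x$ need not itself be rational (the other eigenvalues $\theta_j$ entering $E_i=\prod_{j\ne i}(A-\theta_jI)/(\theta_i-\theta_j)$ may be irrational), so you are really using that the real eigenspace is the real span of your rational basis. Your route buys a proof readable without chasing two external references; the paper's buys brevity.
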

\begin{proof}
By  \cite[Theorem 3]{Akbari2013} there is a nowhere-zero real eigenvector for every eigenvalue of any distance-regular graph.
By \cite[Lemma 3.3]{Akbari} the existence of a nowhere-zero real vector, belonging to the null space of a rational matrix implies the existence of nowhere-zero  integer vector in the null space.
By taking the matrix to be $A_{\Gamma}-\theta_iI$, where $A_{\Gamma}$ is the adjacency matrix of $\Gamma$, we obtain the required.
\end{proof}

Definitions, notation and basic theory are in Section 2. The results of Section 3 are presented in a general context: for the $q$-ary Steiner triple systems and Grassmann graphs; the classic Steiner triple systems and Johnson graph are treated as a particular case. Recently, the $q$-ary Steiner triple systems were shown to exist asymptoti\-cally \cite{Keevash}, however there is only one explicit example \cite{EOW} of order $13$. We consider a  description of the eigenspaces of the block graph of any $q$-ary Steiner triple system in terms of the point-block incidence matrix of $STS$. In particular, for the first eigenvalues, we see that the eigenvectors of the Grassmann graph $J_q(n,3)$ are in a natural one-to-one correspondence with that of the block graph of the $q$-ary Steiner triple system $S$ order $n$: the restriction of any eigenvector of $J_q(n,3)$ to the blocks of $S$ is an eigenvector of the block graph of $S$. This relation between the first eigenspaces of STSs and $J_q(n,k)$ is in spirit of \cite{Vor}, where an extension of the eigenvectors of Johnson graphs to that of Hamming graphs was established. Despite this strong connection, in Section 4 we show that the minimums of the $L_{\infty}$ norm for nowhere-zero eigenvectors for both graphs are different for $q=1$. We establish lower and upper bounds on the optimum norms of the NZI $\theta_1(J(n,k))$-eigenvectors  of the Johnson graphs $J(n,k)$ and obtain the exact minimums for infinite series of Johnson graphs $J(n,k)$. In particular, we completely solve the problem for $k=3$ and all $n>63$ (see Theorem \ref{p:jn3}). Bounds on the $L_{\infty}$ norm of the NZI eigenvectors of the block graphs of STSs with the second eigenvalue (which is equivalent to finding $i$-flow for STSs for small $i$) are given in Section 5. We start Section 5.1 with reviewing the existing results for flows in the projective and the Bose Steiner triple systems which utilize the aspects of cyclicity and resolvability of these designs. In Section 5.2 we show that  any Steiner triple system constructed by Assmuss-Mattson approach \cite{AM} of order at least $99$ has a $5$-flow. The results of this section are described in terms of flows of Steiner triple systems rather than eigenvectors of their block graphs.

 In Section 6 we discuss  completely regular codes in the block graphs of Steiner triple systems. These objects are in the scope of current study, as the minimum possible value of the $L_{\infty}$ norm  of nowhere-zero integer eigenvectors is attained on a vector arising from a specific completely regular code (see Proposition \ref{CRC1}). From the perspective of Cameron-Liebler line classes \cite{CL}, the completely regular codes in the block graphs of the Steiner triple systems with the covering radius $1$ and the first eigenvalue are of interest as they provide one of different variations \cite{DIMS}, \cite{BDIM} of such objects. 
The block graph of the projective (Hamming) Steiner triple system of order $2^r-1$ is isomorphic to the Grassmann graph $J_2(r,2)$ and all such completely regular codes are exactly Cameron-Liebler line classes in $PG(n-1,2)$. We conjecture that these codes comprise only the following classic examples of Cameron-Liebler line classes: a point, hyperplane and nonincident point-hyperplane pair (see Problem 2 in Section 6)  and show that there are no other codes for the STSs of orders $13$ and $15$. These codes in the block graphs of Steiner triple systems and affine Steiner triple systems in particular, 
were considered in \cite{GP}, where these objects and the conjecture above were treated from the perspective of small support eigenvectors of the block graphs.
% As for the completely regular codes in the block graphs of a Steiner triple system $S$ with %the covering radius $1$ and second eigenvalue, we show that such codes are equivalent to a %$1$-%subdesign of Steiner triple system $S$.

\section{Definitions and notations}
\subsection{Johnson, Grassmann graphs, $q$-ary Steiner triple systems and their block graphs}
A regular graph of diameter $d$ is called {\it distance-regular} if there is an array of integers 
$$\{\beta_0,\ldots,\beta_{d-1};\gamma_1,\ldots,\gamma_{d}\},$$
 such that for any vertices $x$ and $y$ at distance $i$, $i\in\{0,\ldots,d\}$ there are
exactly $\beta_i$ neighbors of $y$ at distance $i+1$ from $x$ and $\gamma_i$  neighbors of $y$ at distance $i-1$  from $x$. The array of integers $\{\beta_0,\ldots,\beta_{d-1};\gamma_1,\ldots,\gamma_{d}\}$
is called the {\it intersection array} of the distance-regular graph $\Gamma$.
We say that a nonzero vector $v$ is a $\theta(\Gamma)$-{\it eigenvector} if $A_{\Gamma}v=\theta(\Gamma)v$, where $A_{\Gamma}$ is the adjacency matrix of $\Gamma$. In this case $\theta$ is called an {\it eigenvalue} of $\Gamma$.
It is well known that any distance-regular graph of diameter $d$ has exactly $d+1$ distinct eigenvalues, which we index
 in descending order: $\theta_0(\Gamma)>\theta_1(\Gamma)>\ldots>\theta_d(\Gamma)$. Note that $\theta_0(\Gamma)$ is the valency of the graph $\Gamma$.

The vertices of {\it  the Grassmann graph} $J_q(n,k)$ are  $k$-subspaces of the finite vector space $\FF_q^n$ over the field $\FF_q$ and the edges are the pairs of subspaces meeting in $(k-1)$-subspace. 
We also include a limit case of $q=1$ as we define {\it the Johnson graph} $J(n,k)$ (also denoted by $J_1(n,k)$) to be the graph with the vertex set being $k$-subsets of $\{1,\ldots,n\}$ and edges being pairs of subsets meeting in $(k-1)$ set. 
Let $[^n_k]_q$ be the Gaussian binomial coefficient and $[^n_k]_1$ be the ordinary binomial coefficient.
The $k+1$ eigenvalues of $J_q(n,k)$, $q \geq 1$  are as follows:
	$$ \theta_i(J_q(n,k))=q^{k+1}[^{k-i}_1]_q[^{n-k-i}_1]_q-[^i_1]_q, 0\leq i\leq k.$$

By a $q$-ary {\it Steiner triple system} (briefly, STS) $S$ we mean a collection of $3$-subspaces (called blocks) of $\FF_q^n$ such that any $2$-subspace of $\FF_q^n$ is in exactly one subspace in $S$. 
By letting $q=1$ we include Steiner triple system in the traditional sense in the definition above, i.e. a collection of $3$-subsets of $\{1,\ldots,n\}$ (called blocks or triples) such that any $2$-subset is in exactly one block.
A $1$-subspace of $\FF_q^n$ ($1$-subset of $\{1,\ldots,n\}$) is called {\it a point} of $S$.
By {\it the order} of $S$, we mean the dimension $n$ of the ambient space $F_q^n$ (the number of points for $q=1$).
A Steiner triple system of order $n$ (in the classical sense)  is called {\it resolvable} if its blocks are parted into {\it parallel classes}, i.e. collections of pairwise nonintersecting blocks with the size of each class equal to $\frac{n}{3}$.

{\it The incidence matrix}  of a $q$-ary Steiner triple system $S$ is the matrix $W_S$, whose rows are indexed by the points of $S$ and the columns are indexed by its blocks is defined as follows:
\begin{equation*}
(W_S)_{i,T}=
 \begin{cases}
   1, &i\subseteq T\\   
   0, &\text{otherwise}
 \end{cases}.
\end{equation*}

Similarly, we define the point-block incidence matrix $W$ of $J_q(n,k)$ with the rows indexed by the points ($1$-subspaces) and the columns are indexed by the $k$-subspaces of $\FF_q^n$.
The {\it block intersection} graph of $q$-ary, $q\geq 1$, STS $S$, denoted by $\Gamma_S$,
has the blocks of $S$ as vertices and distinct blocks having a nonempty intersection as the edges. It is well-known that this graph is strongly-regular \cite{AlievSeiden} and has the following eigenvalues:
$$\theta_1(\Gamma_S)=\frac{[^{n}_1]_q-1}{[^3_1]_q-1}-[^3_1]_q-1,$$
$$\theta_2(\Gamma_S)=-[^3_1]_q.$$

The blocks of any $q$-ary STS, $q\geq 1$, of order $n$ could be treated as a set of vertices of $J_q(n,3)$. So the block intersection graph can be viewed as the subgraph of the distance-$2$ graph of $J_q(n,3)$  induced by the blocks of STS. 

%\subsection{Nowhere-zero integer eigenvectors}

\subsection{Completely regular codes}

Given $C\subseteq V(\Gamma)$, the {\it distance partition} with respect to  $C$ is $C_0=C,\ldots,C_{\rho}$ such that
$$C_i=\{x:d(x,C)=i\}.$$

The maximum of all $i$'s is denoted by $\rho$ and is called the {\it covering radius} of $C$.
A subset $C\subseteq V(\Gamma)$  is called a {\it completely regular code}  if there are numbers $\alpha_0, \ldots,\alpha_\rho$, $\beta_0,\ldots,\beta_{\rho-1}$, $\gamma_1,\ldots,\gamma_{\rho}$ such that any vertex of $C_i$
is adjacent to exactly $\alpha_i$, $\beta_i$, $\gamma_i$ vertices of $C_{i}, C_{i+1}$ and $C_{i-1}$ respectively, for $i=0, \dots \rho$ and $\gamma_0=\beta_{\rho+1}=0.$ 
The array $\{\beta_0,\ldots,\beta_{\rho-1};\gamma_1,\ldots, \gamma_{\rho}\}$  is called  {\it the intersection array} of the completely
regular code $C$.
The following tridiagonal $(\rho + 1) \times (\rho + 1)$
matrix
$$\left(%
\begin{array}{cccccc}
\alpha_0 & \beta_0& 0&0 &\ldots &0\\
\gamma_1&\alpha_1 & \beta_1&0 &\ldots &0\\
 .& .&.&.&.&.\\
.& .&.&.&.&.\\
0&.&0& \gamma_{\rho-1}&\alpha_{\rho-1} & \beta_{\rho-1}\\

0&.&0&0& \gamma_{\rho}&\alpha_{\rho}\end{array}%
\right),$$
 is called the {\it intersection matrix}
of the completely regular code $C$.  

The eigenvalues of this matrix are {\it the eigenvalues} of the completely regular code $C$. It is well-known that the eigenvalues of any completely regular code in a regular graph are necessarily eigenvalues of the graph, which is known as Lloyd's theorem. 

Given a set of vertices $C$ in a graph $\Gamma$ we denote by $\chi_C$ its characteristic vector in the vertex set of the graph.

\begin{proposition}\label{2values}(Folklore)
Let $C$ be a completely regular code with $\rho=1$,  eigenvalue $\theta_i(\Gamma)$, $i\neq 0$ and intersection array $\{\beta_0;\gamma_1\}$ in a distance-regular graph $\Gamma$. Then the vector $\beta_0\chi_C-\gamma_1\chi_{V(\Gamma)\setminus C}$ is a $\theta_i(\Gamma)$-eigenvector of $\Gamma$. Moreover, any $\theta_i(\Gamma)$-eigenvector (up to  multiplicity) taking only two values can be obtained in this manner.
\end{proposition}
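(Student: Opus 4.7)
The plan is to prove both directions by direct computation, leveraging the two identities $\alpha_0+\beta_0=k$ and $\gamma_1+\alpha_1=k$, where $k=\theta_0(\Gamma)$ is the valency (these just count, from a vertex of $C_0$ resp.\ $C_1$, the total number of neighbors).

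For the forward direction, set $v=\beta_0\chi_C-\gamma_1\chi_{V(\Gamma)\setminus C}$ and evaluate $(A_\Gamma v)(x)$ in the two cases. If $x\in C$, then $x$ has $\alpha_0$ neighbors in $C$ and $\beta_0$ in $V(\Gamma)\setminus C$, hence $(A_\Gamma v)(x)=\beta_0\alpha_0-\gamma_1\beta_0=\beta_0(\alpha_0-\gamma_1)$. If $x\in V(\Gamma)\setminus C$, then $x$ has $\gamma_1$ neighbors in $C$ and $\alpha_1$ in $V(\Gamma)\setminus C$, hence $(A_\Gamma v)(x)=\beta_0\gamma_1-\gamma_1\alpha_1=-\gamma_1(\alpha_1-\beta_0)$. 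The two scalar multipliers agree because $\alpha_0-\gamma_1=(k-\beta_0)-\gamma_1=\alpha_1-\beta_0$, so in both cases $(A_\Gamma v)(x)=\theta\, v(x)$ with $\theta:=\alpha_0-\gamma_1$. This $\theta$ is precisely the non-Perron eigenvalue of the intersection matrix (the other being $k$, realized by the all-ones vector on the two parts); by Lloyd's theorem it must be one of the $\theta_j(\Gamma)$, and the hypothesis identifies it with $\theta_i(\Gamma)$.

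For the converse, let $v$ be a $\theta_i(\Gamma)$-eigenvector with exactly two values $a\ne b$, and put $C:=\{x\in V(\Gamma):v(x)=a\}$. Writing $n(x):=|N(x)\cap C|$, the eigenvector equation at $x\in C$ becomes $n(x)a+(k-n(x))b=\theta_i a$, which is solvable for $n(x)$ since $a\ne b$ and has a solution depending only on $a,b,k,\theta_i$; the analogous calculation applies on $V(\Gamma)\setminus C$. Therefore $n$ is constant on $C$ and constant on $V(\Gamma)\setminus C$; denote those constants $\alpha_0$ and $\gamma_1$ and let $\beta_0:=k-\alpha_0$, $\alpha_1:=k-\gamma_1$. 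This exhibits $C$ as a completely regular code with $\rho\le 1$, and since $v$ is non-constant both parts are non-empty and so $\rho=1$. Finally, from $\alpha_0 a+\beta_0 b=\theta_i a$ together with $\theta_i\ne k$ (otherwise $v$ would be a constant $\theta_0$-eigenvector) one obtains $\theta_i=\alpha_0-\gamma_1$ and consequently $b/a=-\gamma_1/\beta_0$, i.e.\ $v=(a/\beta_0)(\beta_0\chi_C-\gamma_1\chi_{V(\Gamma)\setminus C})$, establishing uniqueness up to scalars.

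There is no serious obstacle here—the result is folklore and amounts to bookkeeping—but two points deserve attention: in the forward direction, one must verify that the scalar multipliers on $C$ and on $V(\Gamma)\setminus C$ coincide (this is exactly where the two valency identities are used), and in the converse one must rule out $\theta_i=\theta_0=k$ (which would force $v$ to be constant, contradicting the two-value assumption) and use connectedness of $\Gamma$ to guarantee $\rho=1$ rather than $\rho=0$.
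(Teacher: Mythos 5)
Your proof is correct. The paper states this proposition as folklore and gives no proof at all, so there is nothing to compare against; your two-case evaluation of $(A_\Gamma v)(x)$ using the valency identities $\alpha_0+\beta_0=k$ and $\gamma_1+\alpha_1=k$ is exactly the standard bookkeeping argument one would expect. The only step worth tightening is in the converse: constancy of $|N(x)\cap C|$ on $C$ and on $V(\Gamma)\setminus C$ yields complete regularity with $\rho=1$ only after you know the constant $\gamma_1$ on $V(\Gamma)\setminus C$ is nonzero -- otherwise $V(\Gamma)\setminus C$ is not the distance-one class $C_1$ and the distance partition has more parts. As you indicate, connectedness of $\Gamma$ disposes of this: $\gamma_1=0$ (equivalently $b(\theta_i-k)=0$ with $\theta_i\neq k$, i.e.\ $b=0$) would make $C$ a union of connected components, forcing $C=V(\Gamma)$ and contradicting that both values are attained. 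With that observation made explicit the argument is complete.
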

%\begin{proof}
%Sketch. Let $C$ be a completely regular code with $\rho=1$,  eigenvalue $\theta_i(\Gamma)$ intersection array $\{\beta_0,\gamma_1\}$. Then the vector %$\beta_0\chi_C-\gamma_1\chi_{V(\Gamma)\setminus C}$ is a $\theta_i$-eigenvector

%\end{proof}
The current study of the completely regular codes from the point of view of Problem 1 is inspired by the following statement.

\begin{proposition}\label{CRC1}
Let $\Gamma$ be a distance-regular graph.  Then we have $m(i,\Gamma)\geq 2$. Moreover the equality $m(i,\Gamma)=2$ holds if and only if  there is 
a completely regular code with $\rho=1$, intersection array $\{\beta_0;\beta_0\}$ and eigenvalue $\theta_i(\Gamma)$ in $\Gamma$.
\end{proposition}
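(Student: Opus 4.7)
The plan is to split the claim into the trivial lower bound $m(i,\Gamma)\geq 2$ and the two directions of the characterization, all three reducing essentially to Proposition~\ref{2values}. The lower bound is immediate: every entry of an NZI vector is a nonzero integer and hence has absolute value at least one, so $\|u\|_{\infty}\geq 1$ and $m(i,\Gamma)\geq 2$.

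For the ``if'' direction, given a completely regular code $C$ with $\rho=1$, array $\{\beta_0;\beta_0\}$, and eigenvalue $\theta_i(\Gamma)$, Proposition~\ref{2values} hands us the $\theta_i(\Gamma)$-eigenvector $\beta_0\chi_C-\beta_0\chi_{V(\Gamma)\setminus C}$; dividing by the positive integer $\beta_0$ yields the $\pm 1$-valued NZI eigenvector $\chi_C-\chi_{V(\Gamma)\setminus C}$, so $m(i,\Gamma)\leq 2$, and combined with the lower bound we get $m(i,\Gamma)=2$.

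For the ``only if'' direction, fix an NZI $\theta_i(\Gamma)$-eigenvector $u$ realizing $\|u\|_{\infty}=1$, so $u_v\in\{-1,+1\}$ for every vertex $v$. Since $u$ is two-valued, Proposition~\ref{2values} produces a completely regular code $C'$ with $\rho=1$, array $\{\beta_0;\gamma_1\}$, and eigenvalue $\theta_i(\Gamma)$, together with a nonzero scalar $c$ for which $\beta_0\chi_{C'}-\gamma_1\chi_{V(\Gamma)\setminus C'}=cu$. Because $\beta_0$ and $\gamma_1$ are positive integers while $u$ takes only the values $\pm 1$, comparing the two entry values forces $C'$ to coincide with either $\{v:u_v=1\}$ or its complement (depending on the sign of $c$), and in either case $\beta_0=|c|=\gamma_1$, so the array is $\{\beta_0;\beta_0\}$.

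I do not anticipate a real obstacle: Proposition~\ref{2values} does all the structural work, so the argument is essentially bookkeeping. The one place that deserves to be spelled out carefully is the sign/scaling matchup in the last paragraph — the observation that a $\pm 1$-valued $\theta_i$-eigenvector can be realized as a scalar multiple of $\beta_0\chi_{C'}-\gamma_1\chi_{V(\Gamma)\setminus C'}$ with $\beta_0,\gamma_1\in\mathbb{Z}_{>0}$ precisely when $\beta_0=\gamma_1$.
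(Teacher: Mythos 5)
Your proof is correct and follows the same route as the paper: both reduce the statement to the observation that $m(i,\Gamma)=2$ holds exactly when there is a $\pm 1$-valued $\theta_i(\Gamma)$-eigenvector, and then invoke Proposition~\ref{2values} to translate between two-valued eigenvectors and completely regular codes with $\rho=1$. You merely spell out the scaling argument (forcing $\beta_0=\gamma_1$) that the paper leaves implicit in the phrase ``from Proposition~\ref{2values} we obtain the required.''
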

\begin{proof}
We obviously have $m(i,\Gamma)\geq 2$ and $m(i,\Gamma)$ attains the lower bound $2$ if and only if there is a $\theta_i(\Gamma)$-eigenvector taking values $+1$ and $-1$ only. 
From Proposition \ref{2values} we obtain the required. 
\end{proof}

For the completely regular codes with $\rho=1$ and the second eigenvalue, we see that they are equivalent to  1-subdesigns of the considered Steiner triple system.

\begin{proposition}\label{CRCeg2}
Let $S$ be a Steiner triple system of order $n$. A set $S'\subset S$ is a completely regular code with $\rho=1$ and eigenvalue $\theta_2(\Gamma_S)$ if and only if $S'$ is a 1-design.
\end{proposition}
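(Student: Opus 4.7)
The plan relies on two earlier ingredients: Proposition \ref{2eigchar}, which identifies the $\theta_2(\Gamma_S)$-eigenspace of $\Gamma_S$ with $\ker W_S$, and Proposition \ref{2values}, which sets up a correspondence between completely regular codes of covering radius $1$ and two-valued eigenvectors. Together they let me translate the code condition into a statement about $W_S\chi_{S'}$, which is exactly what governs whether $S'$ is a $1$-design (namely, $W_S\chi_{S'}=\lambda\mathbf{1}$ for some constant $\lambda$).

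For the forward direction, I would take $S'$ to be a completely regular code with $\rho=1$, eigenvalue $\theta_2(\Gamma_S)$, and intersection array $\{\beta_0;\gamma_1\}$. By Proposition \ref{2values}, the vector $v=\beta_0\chi_{S'}-\gamma_1\chi_{S\setminus S'}$ is a $\theta_2$-eigenvector and hence lies in $\ker W_S$ by Proposition \ref{2eigchar}. Using the standard identity $W_S\chi_S=r\mathbf{1}$ with replication number $r=(n-1)/2$, the equation $W_Sv=0$ reduces to $(\beta_0+\gamma_1)W_S\chi_{S'}=\gamma_1 r\mathbf{1}$. Thus $W_S\chi_{S'}$ is a scalar multiple of $\mathbf{1}$, which is precisely the $1$-design condition.

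For the reverse direction, I would start from a $1$-design with $W_S\chi_{S'}=\lambda\mathbf{1}$, form $v=(r-\lambda)\chi_{S'}-\lambda\chi_{S\setminus S'}$, and check directly that $W_Sv=\lambda(r-\lambda)\mathbf{1}-\lambda(r-\lambda)\mathbf{1}=0$, placing $v$ in the second eigenspace by Proposition \ref{2eigchar}. For non-trivial $S'$ (i.e.\ $\emptyset\neq S'\neq S$, which is forced by $\rho=1$) one has $0<\lambda<r$, so $v$ has two distinct nonzero values of opposite sign; the converse part of Proposition \ref{2values} then certifies $S'$ as a completely regular code with $\rho=1$ and eigenvalue $\theta_2(\Gamma_S)$.

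There is no real difficulty here once Propositions \ref{2eigchar} and \ref{2values} are in hand; the only thing needing a moment's care is the non-triviality check in the reverse direction, which guarantees that the constructed $v$ is genuinely two-valued so that the converse clause of Proposition \ref{2values} applies.
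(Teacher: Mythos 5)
Your proof is correct and follows essentially the same route as the paper's: both arguments translate the $1$-design condition into $W_Sv=0$ for the two-valued vector $v=(\tfrac{n-1}{2}-\lambda)\chi_{S'}-\lambda\chi_{S\setminus S'}$ and then invoke Propositions \ref{2eigchar} and \ref{2values}. You merely spell out the two directions separately (and flag the non-triviality check that the paper leaves implicit), which is a harmless elaboration rather than a different method.
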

\begin{proof}
Let $S'$ be a subset of blocks of $S$. The set $S'$ is $1$-$(n,3,\lambda)$-design if and only if the vector $W_S((\frac{n-1}{2}-\lambda)\chi_{S'}-\lambda\chi_{S\setminus S'})$ is all-zero, where $W_S$ is  the point-block incidence matrix of STS $S$. This follows from the fact that any point of $S$ belongs to $\frac{n-1}{2}$ and $\lambda$ blocks of $S$ and $S'$. In view of Proposition \ref{2eigchar}.1 below this is equivalent to the vector $(\frac{n-1}{2}-\lambda)\chi_{S'}-\lambda\chi_{S\setminus S'}$ being a $\theta_2(\Gamma_S)$-eigenvector. As any two-valued eigenvector of the graph corresponds to a completely regular code with $\rho=1$, see Proposition \ref{2values}, we obtain the required.

\end{proof}

\section{A description of the eigenspaces of the block graphs of STS and the first eigenspace of $J_q(n,k)$}

% Given a $q$-ary STS $S$ and its point-block incidence matrix $W_S$, a nowhere-zero integer valued vector $v$ such that $W_Sv=0$ is called {\it zero-sum} $(\| v \|_{\infty}+1)$ {\it flow} of $S$ \cite{Akbari}. %We call a nowhere-zero integer-valued $\theta_i(\Gamma)$-eigenvector $v$ of $\Gamma$ a 
 %$(i,\| v \|_{\infty}+1)$ {\it generalized flow} (or $(i,\| v \|_{\infty}+1)$-{\it eigenflow}) of  $\Gamma$.

Firstly, we consider the following  auxiliary fact.
 
\begin{theorem}\label{TAM}\cite[Theorem 1]{Amog} Let $\overline{\Gamma}$ be a biregular bipartite graph with valencies $c$ and $c'$, the halved
graphs $\Gamma$ and $\Gamma'$ and let $I$ denote the $|V(\Gamma)|\times |V(\Gamma')|$ incidence matrix of two parts of $\overline{\Gamma}$.
Let any pair of vertices of $V(\Gamma)$ at distance $2$ in $\overline{\Gamma}$ have exactly $m$ common neighbors and any pair of vertices of $V(\Gamma')$ at distance $2$ in $\overline{\Gamma}$ have exactly  $m'$  common neighbors. The following holds

1. Let $u$ be a $\theta$-eigenvector of $\Gamma'$,  $\theta\neq-\frac{c'}{m'}$. Then the vector $Iu$
 is a $\frac{c'-c+m'\theta}{m}$-eigenvector of $\Gamma$.

2. Given a nonzero vector $u$, $Iu$ is the all-zero vector if and only if $u$ is $-\frac{c'}{m'}$-eigenvector of $\Gamma'$.
\end{theorem}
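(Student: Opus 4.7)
The plan is to translate everything into a short computation with the biadjacency matrix and its Gram matrices. Let $I$ be the $|V(\Gamma')|\times|V(\Gamma)|$ matrix so that $(Iu)_y=\sum_{x\sim y}u_x$ for $y\in V(\Gamma')$. The entries of $I^TI$ count common neighbors in $V(\Gamma')$ of pairs $x,x'\in V(\Gamma)$: by the biregularity hypothesis this is $c$ on the diagonal, $m$ whenever $x$ and $x'$ are at distance $2$ in $\overline{\Gamma}$ (equivalently adjacent in $\Gamma$), and $0$ otherwise. Hence
\[
I^TI \;=\; c\,\mathrm{Id}+mA_{\Gamma},\qquad II^T \;=\; c'\,\mathrm{Id}+m'A_{\Gamma'},
\]
the second identity being symmetric. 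Getting these two identities straight is really the only content of the theorem; the rest is a routine manipulation.

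For Part 1 I would feed $A_{\Gamma}u=\theta u$ into the first identity to get $I^TIu=(c+m\theta)u$, then multiply on the left by $I$:
\[
(II^T)(Iu) \;=\; I(I^TIu) \;=\; (c+m\theta)\,Iu.
\]
Substituting $II^T=c'\mathrm{Id}+m'A_{\Gamma'}$ and solving for $A_{\Gamma'}(Iu)$ produces the claimed eigenvalue $\frac{c-c'+m\theta}{m'}$. The only thing to check is that $Iu\neq 0$, which is handled by Part 2, since the excluded case $\theta=-c/m$ is precisely what would force $Iu$ to vanish.

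For Part 2 I would argue in both directions using the norm identity $\|Iu\|^2=u^T(I^TI)u$. If $Iu=0$ then $(c\,\mathrm{Id}+mA_{\Gamma})u=I^TIu=0$, so $u$ is a $(-c/m)$-eigenvector of $A_{\Gamma}$ (interpreting the statement in the natural way, with $\Gamma$ in place of $\Gamma'$, since $u$ is indexed by $V(\Gamma)$). Conversely, if $A_{\Gamma}u=-\tfrac{c}{m}u$, then $I^TIu=0$ and therefore $\|Iu\|^2=0$, giving $Iu=0$.

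The only conceptual obstacle is the first step, and it is mild: one has to notice that the hypothesis on common neighbors is exactly what is required to collapse $I^TI$ to a linear combination of $\mathrm{Id}$ and $A_{\Gamma}$, and similarly for $II^T$. Once the two Gram identities are in hand, the eigenvalue transfer is the standard observation that $I^TI$ and $II^T$ share nonzero spectra, and the kernel description follows from the positive semidefiniteness of $I^TI$.
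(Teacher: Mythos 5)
Your proof is correct. Note that the paper does not actually prove this statement --- it is quoted verbatim from \cite[Theorem 1]{Amog} --- so there is no internal proof to compare against; your Gram-matrix argument ($I^TI=c\,\mathrm{Id}+mA_{\Gamma}$, $II^T=c'\,\mathrm{Id}+m'A_{\Gamma'}$, transfer of spectra, and positive semidefiniteness for the kernel statement) is the standard derivation, it correctly handles the nonvanishing of $Iu$ via Part 2, and you are right that the ``$\Gamma'$'' in Part 2 of the statement should read ``$\Gamma$''.
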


{\it Example 1}. Consider the biregular graph whose parts are the points and the blocks of a $q$-ary STS $S$ with adjacency being point-block inclusion.
The halved graphs are the complete graph $K_{[^n_1]_q}$  on the points of $S$ (as $\Gamma$ in Theorem \ref{TAM}) and  the block graph $\Gamma_S$ of $S$ (as $\Gamma'$ in Theorem \ref{TAM}), 
$I$ is the point-block incidence matrix $W_S$ of the design $S$.
It is not hard to see that parameters in Theorem \ref{TAM}  are $m=m'=1$, $c=\frac{[^n_1]_q-1}{[^3_1]_q-1}$, $c'=[^3_1]_q$. 

We obtain the following relation between the null-spaces of the incidence matrix of the Steiner triple systems and the second eigenspace of their block graphs.  

\begin{proposition}\label{2eigchar}
Let $S$ be a $q$-ary STS $S$, $q\geq 1$ with the point-block intersection matrix $W_S$. Then

1.  A nonzero vector u fulfills $W_S u$=0 if and only if  $u$ is a
$\theta_2(\Gamma_S)$-eigenvector of $\Gamma_S$.

2. A vector $v$ is a $(\| v \|_{\infty}+1)$-flow for $S$ if and only if $v$ is a
nowhere-zero integer $\theta_2(\Gamma_S)$-eigenvector of the block graph $\Gamma_S$ of the Steiner triple system $S$. 
\end{proposition}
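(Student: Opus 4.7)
The approach is straightforward: Part 1 follows from Theorem \ref{TAM}, part 2, instantiated via Example 1, and Part 2 then follows from Part 1 together with the definition of a flow recalled in the Introduction.

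For Part 1, I take $\overline{\Gamma}$ to be the point-block incidence bipartite graph of Example 1, so that one halved graph is $\Gamma_S$ and the other is the complete graph $K_{[^n_1]_q}$, and $I = W_S$. Because any $2$-subspace (or $2$-subset when $q=1$) of $\FF_q^n$ lies in exactly one block of $S$, two distinct blocks cannot share a common $2$-subspace; hence they meet in at most one point, and any two blocks at distance $2$ in $\overline{\Gamma}$ have exactly $m = 1$ common neighbor. Dually, any two distinct points determine a unique $2$-subspace, which sits in a unique block of $S$, so $m' = 1$ as well. The valencies are $c = [^3_1]_q$ (the number of points in a $3$-subspace) and $c' = \frac{[^n_1]_q-1}{[^3_1]_q-1}$ (the number of blocks through a given point). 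The ``if and only if'' assertion of Theorem \ref{TAM}, part 2, now gives that $W_S u = 0$ if and only if $u$ is a $(-c/m)$-eigenvector of $\Gamma_S$. Since $-c/m = -[^3_1]_q = \theta_2(\Gamma_S)$ by the eigenvalue formula recorded in Section 2.1, this is exactly Part 1.

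For Part 2, the Introduction defines a nowhere-zero $(\|u\|_{\infty}+1)$-flow of $S$ as a nowhere-zero integer vector $u$ satisfying $W_S u = 0$. Substituting the characterization from Part 1 for the condition $W_S u = 0$ immediately yields the equivalence asserted in Part 2.

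The only real obstacle in writing out the proof is bookkeeping: one must check that the intersection parameters $m, m'$ and the valencies $c, c'$ of Example 1 take the values I claim, and that $-c/m$ matches the explicit value of $\theta_2(\Gamma_S)$ stated in Section 2.1. Both verifications are immediate from the defining axioms of a $q$-ary Steiner triple system and the preliminary formulas, so there is no substantive difficulty beyond invoking machinery already assembled earlier in the paper.
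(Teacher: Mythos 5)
Your proof is correct and follows exactly the paper's route: the paper's entire proof is ``apply Theorem \ref{TAM}.2 to the graph in Example 1,'' and you have simply written out the parameter verification ($m=m'=1$, $c=[^3_1]_q$, $-c/m=\theta_2(\Gamma_S)$) that the paper delegates to Example 1, then derived Part 2 from Part 1 and the definition of a flow. No substantive difference.
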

\begin{proof}
We apply  Theorem \ref{TAM}.2 for the graph in Example 1.
\end{proof}

\subsection{The first eigenspaces of the block graphs of STSs, Grassmann and Johnson graphs}
 
%Denote by $U$ the set of all $(-1)$-eigenvectors of the graph $J_q(n,1)$, which coincides with the complete graph $K_{[^n_1]_q}$. It is not hard to see that
%$U$ consists of all real vectors, indexed by the vertices of $K_{[^n_1]_q}$ with the sum of the values being zero.
In the theorem below by $W$ we denote the incidence matrix of all $1$-subspaces  of $\FF_q^n$  ($1$-subsets of $\{1,\ldots,n\}$ for $q=1$)
vs $k$-subspaces $\FF_q^n$  ($1$-subsets of $\{1,\ldots,n\}$ for $q=1$). 
A vector is a  {\it restriction} of a vector $v$  to a set of its indices $S$ if the vector is obtained by deleting all the elements of $v$ having indices outside of $S$. This operation in coding theory is also known as {\it puncturing}.
Contrary a vector  $v$ is {\it extended} to a vector  if the latter is obtained by appending some extra elements to $v$.

In what follows by $U(n,q)$ we denote the set of all real-valued nonzero vectors indexed by the vertices of the graph $J_q(n,1)$, $q\geq 1$ with the sum of its values being zero. 
We also use the shorthand notation $U(n)$ for $U(n,q)$  when $q$ is $1$.

\begin{theorem} \label{TFirsteigen}

1. The set $U(n,q)$ is the set of all $\theta_1(J_q(n,1))$-eigenvectors of $J_q(n,1)$.

2. \cite{D}\cite{NA} $W^T(U(n,q))$ is the set of all $\theta_1(J_q(n,k))$-eigenvectors of $J_q(n,k)$.

3.  If $S$ is a $q$-ary Steiner triple system of order $n$,  then $W_{S}^T(U(n,q))$ is the set of all $\theta_1(\Gamma_S)$-eigenvectors of its block graph $\Gamma_S$.

4. Let $S$ be a $q$-ary Steiner triple system of order $n$ and $\Gamma_S$ be its block graph. The restriction of any $\theta_1(J_q(n,3))$-eigenvector of $J_q(n,3)$ to the blocks of $S$ is a $\theta_1(\Gamma_S)$-eigenvector of $\Gamma_S$ and each $\theta_1(\Gamma_S)$-eigenvector of $\Gamma_S$ is extended to a unique $\theta_1(J_q(n,3))$-eigenvector of $J_q(n,3)$. 

5. If $S$ is a $q$-ary Steiner triple system of order $n$,  then $m(1,\Gamma_S)\leq m(1,J_q(n,[^3_1]_q))$.

\end{theorem}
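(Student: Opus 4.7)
\medskip
\noindent\textbf{Proof plan.} The plan is to establish the five parts in order, each relying on its predecessors.

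Part 1 is a direct verification: the graph $J_q(n,1)$ is the complete graph $K_{[^n_1]_q}$, whose eigenvalues are $[^n_1]_q-1$ (all-ones eigenvector) and $-1$ (eigenspace $\{v:\sum v_i=0\}=U$), and substituting $k=i=1$ in the formula for $\theta_i(J_q(n,k))$ yields $\theta_1(J_q(n,1))=-1$. Part 2 is cited from \cite{D, NA}.

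The main work is Part 3. First I would apply Theorem \ref{TAM}.1 to Example 1 with the roles of $\Gamma$ and $\Gamma'$ exchanged (treating the complete point graph as $\Gamma$): for $u\in U$, which is a $(-1)$-eigenvector of $K_{[^n_1]_q}$, the image $W_S^T u$ is an eigenvector of $\Gamma_S$ with eigenvalue $\frac{c'-c+m'(-1)}{m}=c'-c-1=\theta_1(\Gamma_S)$, so $W_S^T(U)$ lies in the $\theta_1(\Gamma_S)$-eigenspace. The reverse containment follows by a dimension count. Computing $W_S W_S^T$ entrywise yields $c'$ on the diagonal and $1$ off-diagonal, so for $u\in U$ one has $W_S W_S^T u=(c'-1)u$, proving that $W_S^T|_U$ is injective. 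Since additionally $W_S^T \mathbf{1}_P = [^3_1]_q\, \mathbf{1}_B$ is a $\theta_0$-eigenvector of $\Gamma_S$ (hence orthogonal to $W_S^T(U)$), the injectivity extends to all of $\mathbb{R}^{\text{points}}$, whence $\mathrm{rank}(W_S) = [^n_1]_q$. By Proposition \ref{2eigchar}.1 the multiplicity of $\theta_2(\Gamma_S)$ equals $|V(\Gamma_S)| - [^n_1]_q$, so subtracting the trivial multiplicity $1$ forces the multiplicity of $\theta_1(\Gamma_S)$ to be $[^n_1]_q - 1 = \dim U$. Thus $W_S^T$ is a bijection of $U$ onto the whole $\theta_1(\Gamma_S)$-eigenspace.

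Parts 4 and 5 then follow. For Part 4, every $\theta_1(J_q(n,3))$-eigenvector has the form $W^T u$ for a unique $u \in U$ (by Part 2 plus the analogous injectivity argument for $W$); its restriction to blocks of $S$ has $T$-entry $\sum_{p\subset T} u_p = (W_S^T u)_T$, hence equals $W_S^T u$, a $\theta_1(\Gamma_S)$-eigenvector by Part 3. Conversely every $\theta_1(\Gamma_S)$-eigenvector equals $W_S^T u$ for a unique $u \in U$ and extends uniquely to $W^T u$. For Part 5, restricting a NZI vector preserves nowhere-zero-ness and does not enlarge the $L_\infty$-norm, so a NZI $\theta_1(J_q(n,3))$-eigenvector attaining $m(1, J_q(n,3))$ restricts to a NZI $\theta_1(\Gamma_S)$-eigenvector of no larger norm, yielding the bound. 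The principal obstacle is the multiplicity calculation in Part 3, which hinges on combining Proposition \ref{2eigchar}.1 with the injectivity/rank argument for $W_S$ above.
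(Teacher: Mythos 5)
Your proof is correct and follows the paper's overall strategy: both arguments run Theorem \ref{TAM} on the bipartite point--block incidence graph of Example 1 to show that $W_S^T$ carries $U$ into the $\theta_1(\Gamma_S)$-eigenspace, and Parts 1, 2, 4 and 5 are handled exactly as in the paper. Where you genuinely diverge is the surjectivity step of Part 3. The paper applies Theorem \ref{TAM}.1 a second time with the roles of the two halved graphs interchanged, so that $W_S$ sends any $\theta_1(\Gamma_S)$-eigenvector back into $U$, and from the existence of these two one-way maps concludes that $W_S^T$ ``establishes an isomorphism''; this is terse, since two one-way maps do not by themselves yield surjectivity. You instead compute $W_SW_S^T=(c'-1)I+J$ to get injectivity of $W_S^T$ on $U$, deduce $\mathrm{rank}(W_S)=[^n_1]_q$, and invoke Proposition \ref{2eigchar}.1 to read off the $\theta_2(\Gamma_S)$-multiplicity as $|V(\Gamma_S)|-[^n_1]_q$, which forces the $\theta_1$-multiplicity to equal $\dim U=[^n_1]_q-1$. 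This rank-and-multiplicity count is more explicit and in effect supplies the ingredient the paper elides (that $W_SW_S^T|_U$ is a nonzero scalar), at the modest cost of relying on connectivity of $\Gamma_S$ for the trivial eigenvalue having multiplicity one. The only nitpick: when applying Theorem \ref{TAM}.1 you should record that $\theta_1(J_q(n,1))=-1\neq -c/m$, i.e.\ $-1$ is not minus the replication number, which holds for $n>3$.
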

\begin{proof}
1. The graph $J_q(n,1)$ is the complete graph $K_{[^n_1]_q}$. Since $\theta_1(J_q(n,1))=-1$, we see a $(-1)$-eigenvector $u$ of $K_{[^n_1]_q}$ is such that $\sum\limits_{x\in K_{[^n_1]_q}} u_x=0$ and vice versa.

2. For the Johnson graphs $J(n,k)$ this property was established by Delsarte \cite{D}. For the Grassmann graphs the proof could be found in \cite{NA}. We note that the result follows by consecutively applying Theorem \ref{TAM}.1 to the pairs of the graphs $J_q(n,i)$ and $J_q(n,i+1)$ for $i=0,\ldots,k-1$. The graphs are the halved graphs of the bipartite graph, with the adjacency being the inclusion relation of $i$-subspaces into $i+1$-subspace of $\FF_q^n$ (which is $\overline{\Gamma}$ in Theorem \ref{TAM}).

3. In view of Theorem \ref{TAM} consider the bipartite graph $\overline{\Gamma}$ where the adjacency is the containment relation for the vertices (subspaces and subsets for $q=1$) of $J_q(n,1)$ and $J_q(n,k)$.  
The graph is the same as in Example 1 but conversely to it the block intersection graph of $S$ is denoted by $\Gamma$ and the complete graph $K_{[^n_1]_q}$ is denoted by $\Gamma'$. The values 
mentioned in Theorem \ref{TAM} are $c=[^3_1]_q$, $c'=\frac{[^n_1]_q-1}{[^3_1]_q-1}$,  $m=m'=1$. By Theorem \ref{TAM}.1, we see that any vector of $W_S^T(U(n,q))$ is a eigenvector  of  $\Gamma_S$ with eigenvalue $c'-c+\theta_1(J_q(n,1))= \frac{[^{n}_1]_q-1}{[^3_1]_q-1}-[^3_1]_q-1=\theta_1(\Gamma_S)$.

 On the other hand, we apply Theorem \ref{TAM}.1 to $\overline{\Gamma}$  with the interchanged roles of $\Gamma$ and $\Gamma'$. We see that any $\theta_1(\Gamma_S)$-eigenvector $v$ of $\Gamma_S$ implies that $W_S v$ is a $\theta_1(J_q(n,1))$-eigenvector of the graph $J_q(n,1)$. Therefore, $W_S^T$ establishes an isomor\-phism between the first eigenspaces of $J_q(n,1)$ and $\Gamma_S$. 

4.
From the second and third statements of the theorem,  we see that  
$W_S^T$  ($W_{J_q(n,3)}^T$ respectively) settles an isomorphism between the first eigenspaces of the complete graph  and the block graph (the graph $J_q(n,k)$ respectively). The blocks of each STS $S$ could be treated as vertices of $J_q(n,3)$ and the rows of the block-point incidence matrix $W_S^T$ for $q$-ary STS $S$ form a subset of the rows of the block-point incidence matrix $W_{J_q(n,3)}^T$ for $J_q(n,3)$, which implies the required.

5. From the fourth statement of the current theorem we see that the restriction of a $\theta_1(J_q(n,3))$-eigenvector $v$ of $J_q(n,3)$ to the blocks of $S$ is a  $\theta_1(\Gamma_S)$-eigenvector  $v'$ of $S$.
Obviously the norm is not increased upon restriction and we have that $\|v\|_{\infty}\geq \|v'\|_{\infty}$.

\end{proof}
{\bf Remark 1.} When $q$ is $1$ (i.e. for STS in traditional sense and Johnson graphs) the bound in Theorem \ref{TFirsteigen}.5 is not sharp as we show in the next Section 
that for any STS $S$ of order $n$  $m(1,\Gamma_S)\leq 5$ whereas $m(1,J(n,3))\geq 6$ for all $n\geq 64$.

\section{Minimum of the $L_{\infty}$ norm on nowhere-zero integer eigenvector for the block graphs of STSs and Johnson graphs} 
In the rest of the paper, we set $q=1$ and consider Steiner triple systems in the classical sense.

\subsection{Minimum of the $L_{\infty}$ norm of nowhere-zero integer eigenvectors with the first eigenvalue for Johnson graphs}

In this subsection we denote by $(n,k)$ the greatest common divisor of $n$ and $k$.

\begin{lemma}\label{l:samefrac}
Any $\theta_1(J(n,k))$-eigenvector of $J(n,k)$ is equal to $W^Tu$, for some $u\in U(n)$. If $W^Tu$ is a integer $\theta_1(n,k)$-eigenvector of $J(n,k)$, then 
 $u_i$ and $u_j$ have the same fractional parts for all $1\leq i,j\leq n$. Moreover, the fractional parts equal $\frac{r}{s}$, where $r$ and $s$ are some non-negative integers such that 
$0 \leq  r < s$, $(r,s)=1$ and $s$ is a divisor of $(n,k)$.   
\end{lemma}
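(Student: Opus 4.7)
The plan is to break the statement into three parts: the representation of eigenvectors as $W^T u$, the common-fractional-part claim, and the divisibility $s \mid (n,k)$. The first part is essentially a citation: by Theorem \ref{TFirsteigen} (parts 1 and 2), the $\theta_1(J(n,k))$-eigenspace is exactly $W^T(U)$, where $U$ is the space of real vectors on $\{1,\dots,n\}$ of sum zero, so every such eigenvector admits a representation $W^T u$ with $u \in U$.

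For the fractional-part claim I would start from the identity that the coordinate of $W^T u$ indexed by a $k$-subset $T$ equals $\sum_{i \in T} u_i$. Assuming $W^T u$ is integer, I would compare two $k$-subsets differing in a single element: given $i \ne j$, choose a $(k-1)$-subset $T'' \subseteq \{1,\dots,n\} \setminus \{i,j\}$ (available whenever $1 \le k \le n-1$, the only nontrivial range) and set $T = T'' \cup \{i\}$, $T' = T'' \cup \{j\}$. Then $u_i - u_j = (W^T u)_T - (W^T u)_{T'} \in \mathbb{Z}$. Running this over all pairs $i,j$ shows that the $u_i$ share a single common fractional part $f$.

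For the divisibility statement I would write $f = r/s$ in lowest terms with $0 \le r < s$ and $(r,s) = 1$, and read off two constraints. Reducing $\sum_{i=1}^n u_i = 0$ modulo $1$ gives $nf \in \mathbb{Z}$; combined with $(r,s) = 1$ this yields $s \mid n$. Reducing the integer $\sum_{i \in T} u_i$ modulo $1$ for any single $k$-subset $T$ gives $kf \in \mathbb{Z}$, and hence $s \mid k$. Together these force $s \mid (n,k)$.

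The argument is a short chain of integrality reductions modulo $1$, and I do not anticipate any step to be a real obstacle. The only minor bookkeeping is excluding the uninteresting boundary cases $k = 0$ or $k = n$, and accommodating the degenerate situation in which all $u_i$ are already integers, which corresponds to $s = 1$, $r = 0$.
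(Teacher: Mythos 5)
Your proposal is correct and follows essentially the same route as the paper: cite Theorem \ref{TFirsteigen} for the representation $W^Tu$, compare two $k$-subsets differing in one element to get $u_i-u_j\in\mathbb{Z}$, and then reduce $\sum_{i\in T}u_i$ and $\sum_{i=1}^n u_i$ modulo $1$ to obtain $s\mid k$ and $s\mid n$. No substantive differences.
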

\begin{proof}
By Theorem \ref{TFirsteigen} we see that any  $\theta_1(J(n,k))$-eigenvector of $J(n,k)$ is equal to $W^Tu$, where  $u$ is such that $u\in U(n)$. 
  Let us prove that $u_i$ and $u_j$ have the same fractional parts for any $i \ne j$. Consider some pairwise distinct positions $i_1=i,i_2\ldots,i_k$, which are different from $j$. By hypothesis of the theorem we have $u_i+u_{i_2}+\ldots+u_{i_k}$ and $u_j+u_{i_2}+\ldots+u_{i_k}$ are integers. Hence, $u_i-u_j$ is an integer and $u_i$ and $u_j$ have the same fractional parts. Denote this fractional part by $\alpha$. Consider some $k$ elements of $u$: $u_{i_1}\ldots,u_{i_k}$. We have that the sum $u_{i_1}+\ldots+u_{i_k}$ is an integer (since $W^Tu$ is an integer vector). On the other hand, this sum has the same fractional part as $k\alpha$. Hence, $\alpha$ is rational and can be represented as $\frac{r}{s}$, where $r$ and $s$ are non-negative integers, $0 \leq r < s$,
  $(r,s)=1$. %evg added $(r,s)=1$ как в утверждении леммы
Also we have $\frac{kr}{s}$ is an integer, hence, $s$ is a divisor of $k$. Since $\sum\limits_{i=1}^n u_i=0$, $\frac{nr}{s}$ is an integer and, hence, $s$ is a divisor of $n$. Therefore, $s$ is a divisor of $(n,k)$.
\end{proof}

\begin{proposition}\label{p:samecoef}
If $n \geq 2k$ then we have $m(1,J(n,k)) \leq \frac{n-k}{(n,k)}+1$.
\end{proposition}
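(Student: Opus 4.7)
The plan is to exhibit an explicit $\theta_1(J(n,k))$-eigenvector of $J(n,k)$ that witnesses the bound. By Theorem \ref{TFirsteigen}.2 it suffices to produce a real vector $u$ on $\{1,\ldots,n\}$ with $\sum_{i=1}^n u_i = 0$ such that $W^T u$ has nowhere-zero integer entries and $\|W^T u\|_\infty \le \frac{n-k}{(n,k)}$.

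Writing $d=(n,k)$ for brevity, I would try the simplest nonconstant vector compatible with Lemma \ref{l:samefrac}, namely a singleton perturbation of a constant:
\[
u_1 = -\frac{n-1}{d}, \qquad u_i = \frac{1}{d}\ \text{for}\ 2\le i\le n.
\]
The coordinate sum is manifestly zero, and since $d\mid n$ every entry of $u$ has fractional part $1/d$, matching the format prescribed by Lemma \ref{l:samefrac} with $s=d$. For any $k$-subset $T$ a direct calculation gives
\[
(W^T u)_T = \sum_{i\in T} u_i = \begin{cases} k/d & \text{if } 1\notin T, \\ -(n-k)/d & \text{if } 1\in T, \end{cases}
\]
both of which are nonzero integers (using $d\mid k$ and $d\mid (n-k)$). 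The hypothesis $n\ge 2k$ makes $(n-k)/d$ the larger absolute value, so $\|W^T u\|_\infty = (n-k)/d$, and $m(1,J(n,k))\le\frac{n-k}{(n,k)}+1$ follows.

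The only genuine obstacle is guessing the construction; once written down, its verification is immediate. One way to motivate it: for any two-valued $u$ with $m$ coordinates equal to $\alpha$ and the remaining $n-m$ equal to $\beta$ (on a set $A$ of size $m$), the sums $(W^T u)_T$ form an arithmetic progression in $j=|T\cap A|$; taking $m=1$ gives the shortest such progression, which admits the tightest rescaling onto the integer lattice and produces exactly the denominator $d$ demanded by the bound.
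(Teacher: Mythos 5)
Your construction is the same vector the paper uses (up to permuting coordinates): one coordinate equal to $-\frac{n-1}{(n,k)}$ and the remaining $n-1$ equal to $\frac{1}{(n,k)}$, giving the two values $\frac{k}{(n,k)}$ and $-\frac{n-k}{(n,k)}$ on $k$-sets, with $n\ge 2k$ making the latter dominant. The verification is correct and the argument matches the paper's proof essentially verbatim.
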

\begin{proof}
    For this statement we take the vector $$u^{T}=(\frac{1}{(n,k)},\ldots,\frac{1}{(n,k)},-\frac{n-1}{(n,k)}),$$
    of length $n$, where $\frac{1}{(n,k)}$ is repeated $n-1$ times. 
   The vector $W^Tu$ has two different values $\frac{k}{(n,k)}$ and $-\frac{n-k}{(n,k)}$. Since $n \geq 2k$, we have $\|W^Tu\|_{\infty}=\frac{n-k}{(n,k)}$. We see that  $\sum\limits_{i=1}^{n} u_{i}=0$ and from Lemma \ref{l:samefrac} the vector $W^Tu$ is a nowhere-zero integer $\theta_1(J(n,k))$-eigenvector of the graph $J(n,k)$.
\end{proof}

Note that in Proposition~\ref{p:samecoef} the bound tends to infinity when $n$ is growing as a function of $k$. However, for a "small"\, $n$, for example $n=2k$, it can be sharp. 
In the following statements we provide further upper bounds for odd and even cases of $k$.

\begin{proposition}\label{p:oddk}
    Let $k$ be odd. Then 

    1. If $n$ is even then $m(1,J(n,k))\leq k+1$. 

    2. If $n$ is odd then $m(1,J(n,k))\leq 2k+1$.
\end{proposition}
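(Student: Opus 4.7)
The plan is to apply Theorem~\ref{TFirsteigen}: any $\theta_1(J(n,k))$-eigenvector is $W^Tu$ for some $u\in\mathbb{R}^n$ with $\sum_i u_i=0$, and $(W^Tu)_A = \sum_{i\in A} u_i$ over $k$-subsets $A$. So it suffices to construct integer $u$ with zero coordinate-sum whose $k$-subset sums are all nonzero and of small absolute value.

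For Part 1, when $n$ is even, I would take $u_i=+1$ for $i\le n/2$ and $u_i=-1$ for $i>n/2$. For a $k$-subset with $a$ positive positions, $(W^Tu)_A = 2a-k$, which is odd (because $k$ is odd) and hence nonzero, and clearly $|2a-k|\le k$. This immediately gives $m(1,J(n,k))\le k+1$.

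For Part 2, when $n$ (and $k$) are odd, the balanced $\pm1$ trick is not available: we have $n-k$ even and $n\ge k+2$, and we must single out one coordinate. I would use a vector $u$ with $p$ entries equal to $+1$, $q$ entries equal to $-1$, and one distinguished entry equal to an integer $c$, satisfying $p+q+1=n$ and $p-q+c=0$. Since $p+q=n-1$ is even, $p-q$ is even, hence $c$ must be even. For a $k$-subset avoiding the distinguished coordinate, the sum equals $2a-k$, odd and nonzero with absolute value at most $k$, exactly as in Part~1. For a $k$-subset containing the distinguished coordinate, with $a$ of the remaining $k-1$ positions being $+1$, the sum is $2a-(k-1)+c$, which is even; it vanishes precisely when $c\in\{-(k-1),-(k-3),\dots,k-1\}$. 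Together with the parity constraint that $c$ is even and $k$ odd, the minimal admissible magnitude is $|c|=k+1$.

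Taking $c=k+1$ yields $p=(n-k-2)/2$ and $q=(n+k)/2$, both nonnegative by $n\ge k+2$. The worst subset-sum in absolute value is $(k-1)+(k+1)=2k$, giving $\|W^Tu\|_\infty\le 2k$ and therefore $m(1,J(n,k))\le 2k+1$. The main subtlety, and the only step requiring real thought, is the Part~2 parity analysis: the odd-parity argument of Part~1 kills zero sums only for subsets missing the distinguished coordinate, so for the remaining subsets one must rely on magnitude instead, and the joint parity constraints (from $n$ odd and $k$ odd) force $|c|$ as large as $k+1$, which is what pins the bound at $2k+1$ rather than something smaller.
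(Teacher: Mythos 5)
Your construction is correct and coincides with the paper's: the balanced $\pm1$ vector for even $n$, and for odd $n$ the vector with $\frac{n-k-2}{2}$ entries $+1$, $\frac{n+k}{2}$ entries $-1$, and one distinguished entry $k+1$, verified via Theorem~\ref{TFirsteigen}. The parity discussion you add for why the distinguished value must be the even number $k+1$ is a sound elaboration of the same argument.
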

\begin{proof}
By Lemma \ref{l:samefrac}, the vector $W^Tu$ is a $\theta_1(J(n,k))$-eigenvector of $J(n,k)$ if $u\in U(n)$.

    1) For the first statement we take the vector 

    $$u^T=(1,\ldots,1,-1,\ldots,-1),$$
    of length $n$ with $\frac{n}{2}$ positions with the value $1$ and $\frac{n}{2}$ positions with the value $-1$. 
    %Since $k$ is odd, sum of the values of any $k$ positions not equals $0$.

    2) For the second statement we take the vector 
    $$u^T=(k+1, -1,\ldots,-1,1,\ldots,1),$$ 
    of length $n$ with $\frac{n-k-2}{2}$ positions with the value $1$, $\frac{n+k}{2}$ positions with the value $-1$, where $u_1=k+1$.  
\end{proof}

\begin{proposition}\label{p:upbound}
Let $k$ be even and $\gamma$ be the smallest positive integer number that does not divide $k$. We have the following.

1. If $n$ is divisible by $\gamma$ then $m(1,J(n,k)) \leq (\lfloor\frac{\gamma}{2}\rfloor+1)k+1$.

2. If $n$ is not divisible by $\gamma$ then $m(1,J(n,k)) \leq (\lfloor\frac{\gamma}{2}\rfloor+1)(2k+\beta-1)+1$, where $\beta$ is the remainder of division of $n-(k+1)$ by $\gamma$.
\end{proposition}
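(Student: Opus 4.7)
The plan is to apply Theorem~\ref{TFirsteigen}.2, which identifies every $\theta_1(J(n,k))$-eigenvector of the Johnson graph with $W^{T}u$ for some $u\in\mathbb{R}^n$ satisfying $\sum_i u_i=0$. So I will exhibit an explicit integer $u$ with $\sum u_i=0$ whose $k$-subset sums $(W^{T}u)_T=\sum_{i\in T}u_i$ are all nonzero and of controlled magnitude.

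A key preliminary observation is that $\gamma$ must be a prime power: if $\gamma=cd$ with coprime $1<c,d<\gamma$, then by the minimality of $\gamma$ we get $c\mid k$ and $d\mid k$, hence $\gamma\mid k$, a contradiction. Since $k$ is even, $\gamma\neq 2$, so $\gamma$ is either an odd prime power or $\gamma=2^e$ with $e\geq 2$. Set $a=\lfloor\gamma/2\rfloor+1$ and $b=\gamma-a$. Then $\gcd(a,b)=1$ in both cases: for $\gamma$ odd one has $a-b=1$; for $\gamma=2^e$ both $a$ and $b$ are odd with $a-b=2$, so $\gcd(a,b)$ divides $2$ and is odd.

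For part~1 I take $u$ with $nb/\gamma$ coordinates equal to $a$ and $na/\gamma$ coordinates equal to $-b$; these counts are integers by $\gamma\mid n$, and $\sum u_i=0$ by construction. For a $k$-subset $T$ containing $t$ coordinates of value $a$, $(W^{T}u)_T=ta-(k-t)b=t\gamma-kb$, which vanishes iff $\gamma\mid kb$, i.e.\ (using $\gcd(\gamma,b)=1$) iff $\gamma\mid k$, contradicting the choice of $\gamma$. As $t$ ranges over $\{0,\ldots,k\}$, $|(W^{T}u)_T|\leq\max(kb,ka)=ka=(\lfloor\gamma/2\rfloor+1)k$, proving part~1.

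For part~2, where $\gamma\nmid n$, I would write $n=\gamma m+(k+1+\beta)$ and extend the construction above by adjoining a ``correction block'' of $k+1+\beta$ coordinates to a balanced block of $\gamma m$ coordinates carrying a suitably scaled version of the part~1 pattern; the values on the correction block are selected so that the global sum $\sum u_i$ vanishes and no mixed $k$-subset sum vanishes, while keeping the maximum magnitude at $a(2k+\beta-1)$. The main obstacle is ruling out zero sums for $k$-subsets that split as $s+(k-s)$ between the two blocks: for every admissible split the resulting sum must be shown nonzero, which reduces to a coprimality check modulo $\gamma$ using $\gcd(a,b)=1$ together with the congruence $n\equiv k+1+\beta\pmod{\gamma}$. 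The borderline case $\beta=\gamma-1$ (where $n\equiv k\pmod{\gamma}$) is the subtlest, since the naive correction scheme fails there and one must split the correction block into two pieces whose values lie in distinct residue classes modulo $\gamma$. Once these verifications are in place, the infinity-norm bound follows by maximizing over all subset types.
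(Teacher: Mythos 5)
Your proof of part~1 is correct and is essentially the paper's construction: the two values $a=\lfloor\gamma/2\rfloor+1$ and $-b=-(\gamma-a)$ coincide with the values the paper places in its balanced blocks (for $\gamma$ odd these are $\lfloor\gamma/2\rfloor+1$ and $-\lfloor\gamma/2\rfloor$; for $\gamma=2l$ they are $l+1$ and $-l+1$). Your route to the non-vanishing of the subset sums is slightly cleaner: the observation that $\gamma$ is a prime power, hence $\gcd(a,b)=\gcd(\gamma,b)=1$, replaces the paper's case analysis on the parity of $l$ in the equation $2lx=k(l-1)$. That part stands on its own.

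Part~2, however, is a genuine gap: you describe a plan, not a proof. You never specify the values on the correction block of $k+1+\beta$ coordinates, you explicitly defer the verification that no mixed $k$-subset sum vanishes, and you flag a ``borderline case $\beta=\gamma-1$'' where you say your (unstated) scheme fails and must be modified --- so by your own account the construction is not pinned down. The missing idea is the paper's choice of correction block: it consists of $k+\beta$ coordinates all equal to $a$ (the \emph{same} positive value as in the balanced blocks) and a single coordinate equal to $-(k+\beta)a$, which makes the block sum to zero. With this choice there is nothing residue-theoretic left to check and no borderline case: any $k$-subset containing the large negative coordinate has sum at most $-(k+\beta)a+(k-1)a=-(\beta+1)a<0$, and any $k$-subset avoiding it takes values only in $\{a,-b\}$, so it is covered verbatim by your part~1 computation $t\gamma-kb\not\equiv 0$. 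The norm bound then follows since the most negative achievable sum is $-(k+\beta)a-(k-1)b\geq -a(2k+\beta-1)$. Without an explicit correction block and the accompanying verification, part~2 of the proposition is not established.
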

\begin{proof}
    By the hypothesis of the proposition, $n-(\beta+k+1)=q\gamma$ for some positive integers $q$ and $\beta$.
    We now define a vector $u\in U(n)$. Divide the first $n-(\beta+k+1)$ positions into $q$ %! $\gamma$ 
blocks of the same size $\gamma$.  
    If $\gamma$ is odd then each block consists of $\lfloor\frac{\gamma}{2}\rfloor$ positions with the value $\lfloor\frac{\gamma}{2}\rfloor+1$ and $\lfloor\frac{\gamma}{2}\rfloor+1$ positions with the value $-\lfloor\frac{\gamma}{2}\rfloor$.
    If $\gamma$ is even then each block consists of $\lfloor\frac{\gamma}{2}\rfloor-1$ positions with the  value $\lfloor\frac{\gamma}{2}\rfloor+1$ and $\lfloor\frac{\gamma}{2}\rfloor+1$ positions with the value $-\lfloor\frac{\gamma}{2}\rfloor+1$.
    Note that we described the values in all $\gamma$ positions in each block and the sum of the values in each block equals $0$. 
    We now define the values for the remaining $k+\beta+1$ positions of $u$. We set one element to be equal to $-(k+\beta)(\lfloor\frac{\gamma}{2}\rfloor+1)$ and the other $(k+\beta)$ elements to be $(\lfloor\frac{\gamma}{2}\rfloor+1)$. 
The sum of the values in all positions of $u$ equals $0$, i.e. $u\in U(n)$, and the sum of the values in any $k$ positions is an integer, i.e.  $W^Tu$ is an eigenvector that takes only integer values. 

Let us prove that the sum of the values in any $k$ distinct positions is not $0$. If one of the elements in these positions is $-(\beta+k)(\lfloor\frac{\gamma}{2}\rfloor+1)$ then the sum is less than $0$ because the  absolute value of any other element of $u$ is not greater than $\lfloor\frac{\gamma}{2}\rfloor+1$.
    
    Consider the case $\gamma$ is even i.e. $\gamma=2l$. Take $x$, $x \in \{0,1,\ldots,k\}$ positions with the values $l+1$ and $k-x$ positions with the values $-l+1$.
    The sum of the values in these positions equals $lx+x-kl+k+lx-x=2lx-k(l-1)$ which is $0$ if and only if $$2lx=k(l-1).$$ 

Let $l$ be odd. From the equality above we see that $k$ is divisible by odd $l$ and by $2$ from the condition of the proposition, so $k$ is divisible by $\gamma=2l$, which contradicts the definition of $\gamma$. If $l$ is even, then $l-1$ is odd. Hence, again from the equality above we see that $k$ should be divisible by $\gamma=2l$, a contradiction. 
    
    Consider the case when $\gamma$ is odd, i.e. $\gamma=2l+1$.
    Take $x$ positions, $x \in \{0,1,\ldots,k\}$ with the  value $l+1$ and $k-x$ positions with the value $-l$. 
    The sum of the values in these positions equals $lx+x-kl+lx$. If this sum equals $0$ then $x(2l+1)=kl$. Since $k$ is not divisible by $\gamma=2l+1$, this equality does not hold. %!this sum is equal to 0

    Therefore, $W^Tu$ is a NZI vector. If $n$ is divisible by $\gamma$, then $\|W^Tu\|_{\infty}\leq(\lfloor\frac{\gamma}{2}\rfloor+1)k$. If $n$ is not divisible by $\gamma$, then $\|W^Tu\|_{\infty}\leq(\lfloor\frac{\gamma}{2}\rfloor+1)(2k+\beta-1)$.
    %evg в этом абзаце оба равентсва заменил на неравенства. В первом случае при небольших $n$ может не найтись $k$ положительных чисел, во втором случае у нас отрицательные значения меньше по абсолютной величине, поэтому  столько не наберется. 
\end{proof}

In the following lemmas we study the structural properties of NZI vectors as we are working towards lower bounds on $m(1,J(n,k))$.

\begin{lemma}\label{l:notzerosum}
    Let a vector $u\in U(n)$ have at least $k$ positions with the value $a+\frac{r}{s}$ and at least $k$ positions with the value $-b+\frac{r}{s}$ for some non-negative integers $a,b, r$ and $s$, where $b \ne 0, 0 \leq r < s, (r,s)=1 \text{ and } s \text { is a divisor of } (n,k)$. If $W^{T}u$ is a NZI vector then the number $\frac{k(bs-r)}{s(a+b)}$ is not an integer.
\end{lemma}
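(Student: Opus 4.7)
The plan is to argue by contradiction: assume that $m := \frac{k(bs-r)}{s(a+b)}$ is an integer and exhibit a $k$-subset $T \subseteq \{1,\ldots,n\}$ on which $u$ sums to zero. Since the coordinate of $W^{T}u$ indexed by $T$ equals $\sum_{i \in T} u_i$, such a $T$ would place a zero coordinate in $W^{T}u$, contradicting the assumption that $W^{T}u$ is nowhere-zero.

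To build $T$, I take $x$ indices from among the (at least $k$) positions of value $a+\tfrac{r}{s}$ together with $k-x$ indices from the positions of value $-b+\tfrac{r}{s}$. The sum over these $k$ positions equals
\[
x\bigl(a+\tfrac{r}{s}\bigr) + (k-x)\bigl(-b+\tfrac{r}{s}\bigr) = x(a+b) - kb + \tfrac{kr}{s},
\]
which vanishes exactly when $x = \frac{k(bs-r)}{s(a+b)} = m$. So setting $x=m$ yields a zero-sum $k$-subset, provided $0 \leq m \leq k$ and provided enough coordinates of each value are available.

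Verification of these two conditions is, as far as I can tell, the only mildly technical point in the argument. Availability follows immediately from the hypothesis of at least $k$ positions of each value. For the range of $m$: from $b \geq 1$ and $0 \leq r < s$ one obtains $bs - r \geq 1 > 0$, so $m > 0$; and rearranging $m \leq k$ gives $bs - r \leq s(a+b)$, i.e.\ $sa + r \geq 0$, which is automatic. Hence the integer $m$ lies in $\{0,\ldots,k\}$, the subset $T$ can be formed, and the contradiction with the NZI property of $W^{T}u$ is reached. Note that the additional hypotheses $(n,s)=1$ and $s\mid(n,k)$ appear unnecessary for this particular implication; they are presumably retained for consistency with Lemma~\ref{l:samefrac}, which constrains the admissible values of $s$.
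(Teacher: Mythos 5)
Your proof is correct and follows essentially the same route as the paper's: pick $x$ positions of value $a+\frac{r}{s}$ and $k-x$ of value $-b+\frac{r}{s}$, observe the sum $x(a+b)-kb+\frac{kr}{s}$ vanishes exactly when $x=\frac{k(bs-r)}{s(a+b)}$, and conclude this would force a zero coordinate in $W^Tu$. Your explicit check that this value lies in $\{0,\ldots,k\}$ is a detail the paper leaves implicit, and your remark that $(n,s)=1$ and $s\mid(n,k)$ are not needed for this implication is accurate.
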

\begin{proof}
    Take $x$, $0 \leq x \leq k$, positions of $u$ with the value $a+\frac{r}{s}$ and $k-x$ positions with the value $-b+\frac{r}{s}$. The sum of these values equals $x(a+b)+\frac{kr}{s}-bk$. This sum equals $0$ if and only if $x=\frac{(bs-r)k}{s(a+b)}$ is an integer.
\end{proof}

\begin{lemma}\label{l:eqval}
Let $u$ be a vector in $U(n)$ such that $W^Tu$ is NZI and $\|W^Tu\|_{\infty}+1=m(1,J(n,k))$. Let  $n>j^2+2kj +3k-j-\frac{j^2}{k}$, where $j=2k$ if $k$ is odd and $j=(\lfloor\frac{\gamma}{2}\rfloor+1)(2k+\gamma)$ if $k$ is even, $\gamma$ be the smallest positive integer number that is not a divisor of $k$. Then $u$ has at least $k$ positions with the same positive value and at least $k$ positions with the same negative value.  
\end{lemma}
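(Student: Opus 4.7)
The plan is to argue by contradiction: by symmetry (replace $u$ by $-u$) it suffices to assume that no positive value of $u$ has multiplicity at least $k$, and derive an upper bound on $n$ that violates the hypothesis. Two inputs set the stage. First, Propositions~\ref{p:oddk} and \ref{p:upbound} give $m(1,J(n,k))\le j+1$, so the optimality of $u$ forces $\|W^Tu\|_\infty\le j$; every $k$-subset sum of entries of $u$ lies in $[-j,j]\setminus\{0\}$. Second, Lemma~\ref{l:samefrac} gives all entries of $u$ a common fractional part $r/s$ with $s\mid(n,k)$, so in particular $s\le k$.

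Sort the entries as $u_1\ge u_2\ge\cdots\ge u_n$. The sliding $k$-block sum $T_i=u_i+\cdots+u_{i+k-1}$ lies in $[-j,j]$, and the squeeze $ku_{i+k-1}\le T_i\le ku_i$ yields $u_m\le j/k$ for $m\ge k$ and $u_m\ge -j/k$ for $m\le n-k+1$, so every \emph{middle} position $m\in[k,n-k+1]$ has $|u_m|\le j/k$. Exchanging a single index in a $k$-subset gives $|u_i-u_{i'}|\le 2j$ for any two positions, and in particular $M_+:=u_1\le 2j$. The distinct positive values of $u$ in $(0,j/k]$ are of the form $a+r/s$ with $a\in\mathbb Z$, so they number at most $\lfloor j/k\rfloor+1$; under the contradiction hypothesis each has multiplicity at most $k-1$, hence the count $N_+^M$ of positive middle entries is at most $(k-1)(\lfloor j/k\rfloor+1)$, while positive entries exceeding $j/k$ are confined to the top $k-1$ positions. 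This yields $N_+\le(k-1)(\lfloor j/k\rfloor+2)$ and, by a similar zero-$k$-subset argument, $N_0\le k-1$. Using $\sum u_i=0$ I define $\sigma=\sum_{u_i>0}u_i=\sum_{u_i<0}|u_i|$ and estimate $\sigma\le(k-1)M_++N_+^M\cdot(j/k)\le(k-1)\bigl[M_++(j/k)(j/k+1)\bigr]$; since each negative entry has absolute value at least $1-r/s$, I then obtain $N_-\le\sigma/(1-r/s)$. Substituting $M_+\le 2j$ and adding $n=N_++N_-+N_0$ gives the desired inequality $n\le j^2+2kj+3k-j-j^2/k$, contradicting the hypothesis.

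The main obstacle lies in the case $r>0$, where $1-r/s$ can be as small as $1/s\le 1/k$ and the bound $N_-\le\sigma/(1-r/s)$ naively loses a factor up to $s\le k$, which alone would overshoot the stated threshold. The clean case is $r=0$, in which every negative entry has absolute value at least $1$ and the argument above is immediate. For $r>0$ I plan to separate the contribution from the extremal values $r/s$ and $-1+r/s$: the former has multiplicity at most $k-1$ by the contradiction hypothesis and contributes less than $k$ to $\sigma$, and a parallel bookkeeping for $-1+r/s$, using both Lemma~\ref{l:notzerosum} (which forbids certain pairs of high multiplicities) and the sum identity $\sum u_i=0$, pins down its multiplicity tightly enough that the bulk of the negative entries have absolute value at least $1$; this restores the estimate $N_-\le\sigma+O(k)$ uniformly in $r$ and the final arithmetic matches the $r=0$ case.
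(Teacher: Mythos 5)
Your reduction, your bounds $N_+\le(k-1)(\lfloor j/k\rfloor+2)$ and $N_0\le k-1$, and your treatment of the case $r=0$ are all sound, and in that case your count does come in under $T(k)$. But the case $r>0$, which you yourself flag, is a genuine gap, and the repair you sketch cannot work. The chain $\sigma\le(k-1)\bigl[M_++(j/k)(j/k+1)\bigr]$ followed by $N_-\le\sigma/(1-r/s)$ loses a factor of up to $s\le k$ in the dominant term $2j(k-1)$, giving $n\lesssim j^2/2+2jk^2$, which does not contradict $n>T(k)\approx j^2+2kj$ already for $k=4$. Your proposed fix is to pin down the multiplicity of the value $-1+r/s$ using Lemma~\ref{l:notzerosum}; but that lemma needs at least $k$ positions carrying a value of the form $a+r/s$ with $a\ge 0$, i.e.\ $k$ equal entries on the nonnegative side -- precisely what your contradiction hypothesis denies -- so it gives you nothing here. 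Nor is the multiplicity of $-1+r/s$ actually small: configurations with $\Theta(n)$ entries equal to $-1+r/s$ satisfy every constraint you have assembled up to that point, and are only excluded by a mechanism you have not invoked.

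That missing mechanism is the one the paper uses (in mirror image, since the paper's WLOG is that the \emph{large} side is positive while yours is that the \emph{failing} side is positive): do not upper-bound $\sigma$ by the sizes of the individual positive entries, but by the norm constraint on the $k$ largest positives. Every nonzero entry has absolute value at least $1/k$ by Lemma~\ref{l:samefrac}, so $\sigma\ge N_-/k$; on the other hand, if $N_+\ge k$ the $k$ largest positive entries form a $k$-subset whose sum is at least $k\sigma/N_+$ and at most $j$, whence $\sigma\le jN_+/k$ (the case $N_+<k$ is handled as in the paper's $y<k$ case by completing the positives to a $k$-set with the least negative entries). Combining, $N_-\le jN_+$ and $n\le N_0+(j+1)N_+\le (k-1)+(j+1)(k-1)(j/k+2)=T(k)-j/k-3<T(k)$, the desired contradiction, uniformly in $r$. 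So your skeleton is salvageable, but the step bounding $N_-$ must be replaced by this averaging argument rather than patched by multiplicity bookkeeping for $-1+r/s$.
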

\begin{proof}
 If vector $u$ has $k$ positions with the value $0$ then an element of $W^Tu$ is zero. Hence, at least $n-k+1$ positions of $u$ has nonzero values.
 By Propositions ~\ref{p:oddk} and ~\ref{p:upbound} we have that $\|W^Tu\|_{\infty} \leq j$, where $j=2k$ if $k$ is odd and $j= (\lfloor\frac{\gamma}{2}\rfloor+1)(2k+\gamma)$ if $k$ is even. Here $\gamma$ is the smallest positive integer number that does not divide $k$.
 %Denote $m=\frac{(\lfloor\frac{\gamma}{2}\rfloor+1)(2k+\beta-1)}{k}$. 
 Also denote by $x$ the number of positions with positive values and  by $y$ the number of positions with negative values. By the remark on the number of nonzero elements in $u$ in the beginning of the proof,  we have that
\begin{equation}\label{exy}x+y\geq n-k+1.\end{equation}%!added more details
 
 Let us denote $\frac{j}{k}$ by $m$. By Lemma~\ref{l:samefrac} the values of the vector $u$ in all positions have the same fractional part. If $u$ does not have $k$ positions with the same value then there are not more than $k-1$ positions with the integer part $i$ for any nonnegative $i$. So there are not more than $(\lfloor m \rfloor+1)(k-1)$ positions with the integer part not more than $\lfloor m \rfloor$. 
 So, if $x>(m+2)(k-1) \geq (\lfloor m \rfloor+2)(k-1)$ %! it was $x>(m+2)(k-1) \geq \lfloor m \rfloor(k-1)$ before  %evg тут (\lfloor m \rfloor+2) все таки а не (\lfloor m \rfloor+1) 
  %evg пробел добавил перед then
 then there are at least $k$ positions with the same positive value or there are $k$ positions with values that are more than $\lfloor m \rfloor +1 > m$. In the latter case we have	
 $\|W^Tu\|_{\infty}>mk=j$, a contradiction. 
 Analogously, if $y>(m+2)(k-1)$ then there are at least $k$ positions with the same negative value or we have a contradiction with the minimality of norm. 
 
We show that  $n-k+1>2(m+2)(k-1)$.
By the condition of the lemma we have the following:

$$n-k+1-2(m+2)(k-1)=n-k+1-2(\frac{j}{k}+2)(k-1)>$$
$$j^2+2kj+3k-j-\frac{j^2}{k}-k+1-2j-4k+\frac{2j}{k}+4=$$

$$j^2-3j-2k+5+2kj-\frac{j^2}{k}+\frac{2j}{k}=$$

$$j(j(1-\frac{1}{k})-3+2k+\frac{2}{k})+5-2k $$

Note that $j\geq 2k\geq 4$, so 
$$j(j(1-\frac{1}{k})-3+2k+\frac{2}{k})+5-2k\geq$$

$$j(2k-2-3+2k+\frac{2}{k})+5-2k\geq j-2k+5>0.$$

 %! I added the inequalities above to explain why $n-k+1>2(m+2)(k-1)$, may be a simplier argument  is possible

Because $n-k+1>2(m+2)(k-1)$ then $x$ or $y$ is more than $(m+2)(k-1)$. 
 Without loss of generality, we assume that $x>(m+2)(k-1)$. 
 If $y$ is also more than $(m+2)(k-1)$, then the Lemma holds, so we consider the case \begin{equation}\label{ee1}y \leq (m+2)(k-1)\end{equation} in more details below.

 In view of Lemma \ref{l:samefrac},  we have that the positive values of $u$ are not less than $\frac{1}{k}$. 
This, combined with inequalities (\ref{exy}) and (\ref{ee1}) implies that the sum of all positive elements in $u$ is such that \begin{equation}\label{ee2}\sum\limits_{i=1,\ldots,n,u_i>0} u_i\geq \frac{x}{k} \geq \frac{n-k+1-y}{k} \geq \frac{n-k+1-(m+2)(k-1)}{k}.\end{equation} %% Mb switch to equation format?? 

Case 1.  If $y<k$ we consider the sum of $y$ positions with negative values and $k-y$ positions with the smallest positive values in the vector $u$. Since $\sum\limits_{i=1}^{n} u_i=0$ the absolute value of this sum  equals the sum of $x-k+y$ positions with the largest positive values. Since each positive value in $u$ is at least $\frac{1}{k}$, this sum is not less than $\frac{x-k+y}{k} \geq \frac{(n-k+1-y)-k+y}{k}=\frac{n-2k+1}{k}$. Hence, because $n>j^2+2kj +3k-j-\frac{j^2}{k}>jk+2k+1$, which we have by the hypothesis of the Lemma, we obtain 
$$\|W^Tu\|_{\infty}>\frac{n-2k+1}{k}>\frac{jk+2k+1-2k+1}{k}=\frac{jk+2}{k}>j,$$%!gave more details here, please crosscheck
 which is a contradiction.
  
Case 2. If $y\geq k$ we
 consider the $k$ minimum negative values of $u$.
 The sum of these values is not greater than average negative value $k$ times. 
 Because $\sum\limits_{i=1,\ldots,k}u_i=0$, the absolutes of sum of negatives and sum of positives coincide.
 %evg Здесь было Because $\sum\limits_{i=1,\ldots,k}u_i=0$, the absolutes of average negative and average positive coincide. Это вообще говоря неверно, тут наверно имелось ввиду все-таки суммы, а не средние, поменял на суммы  Дальше вроде бы правильно
So, from (\ref{ee2}) the average 
among
%evg здесь было amoung, вроде among правильно
all negative is less then or equal to $-\frac{(n-k+1-y)}{ky}$ and there are pairwise distinct $i_1,\ldots,i_k$:

\begin{equation}\label{ee3} \sum\limits_{l=1,\ldots,k} u_{i_l}\leq -\frac{(n-k+1-y)}{y}.\end{equation} 

We show that  we have $n>(j+1)y+k-1$.
By the hypothesis of the Lemma

$$n>j^2+2kj +3k-j-\frac{j^2}{k}=j(j+2k-1-\frac{j}{k})+3k=j(mk+2k-1-m)+3k=$$
$$j(mk+2k-2-m)+mk+3k>j(mk+2k-2-m)+(mk+2k-2-m)+k=$$
$$(j+1)(m+2)(k-1)+k$$
From (\ref{ee1}) we have that 
$$(j+1)(m+2)(k-1)+k\geq (j+1)y+k> y(j+1)+k-1,$$
%% again, changed ">" to  "$\geq$" 
so we have that $n>(j+1)y+k-1$ and therefore from $\sum\limits_{l=1,\ldots,k} u_{i_l}\leq -\frac{(n-k+1-y)}{y} <-j$, which contradicts $\|W^Tu\|_{\infty}\leq j$.

\end{proof}

We introduce extra notations. Let $\gamma$ be  the smallest positive integer number that is not a divisor of  $k$.
Denote by $T(k)$ the number 
$j^2+2kj +3k-j-\frac{j^2}{k}$,  where  $j=(\lfloor\frac{\gamma}{2}\rfloor+1)(2k+\gamma)$ if $\gamma$ is even and $j=2k$ if $\gamma$  is odd. 

We also introduce several extra sets and a number:
\[
\begin{split}
   & B(n,k)=\{(a,b,r,s): \frac{(bs-r)k}{s(a+b)} \text{ is not an integer},  
    \text{ where } a,b,r,s\geq 0  \text{ are integers, }\\
 &b\ne 0,\text{ }0 \leq r < s, (s,r)=1, s \text { is a divisor of } (n,k) \}, 
    \end{split}
    \] 

\[
\begin{split}
    N(n,k)=\min\{\max\{k(a+\frac{r}{s}), k(b-\frac{r}{s})\}: (a,b,r,s) \in B(n,k)\}
    \end{split}
    ,\] 
\[
\begin{split} 
M(n,k)=\{(a,b,r,s) \in B(n,k): \max\{k(a+\frac{r}{s}), k(b-\frac{r}{s})\}=N(n,k) \}.
\end{split}
\]

\begin{theorem}\label{t:bound}
    If  $n>T(k)$, then $m(1,J(n,k)) \geq N(n,k)+1$.  
\end{theorem}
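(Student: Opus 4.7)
The plan is a direct synthesis of the three preceding lemmas. Consider any NZI $\theta_1(J(n,k))$-eigenvector $v$ of $J(n,k)$ whose infinity norm realizes the minimum, so that $\|v\|_{\infty}+1 = m(1,J(n,k))$. By Theorem~\ref{TFirsteigen}.2 we may write $v = W^{T}u$ with $\sum_{i=1}^{n} u_i = 0$, and Lemma~\ref{l:samefrac} then forces every coordinate of $u$ to share a common fractional part $r/s$, with $0 \leq r < s$, $(r,s)=1$, and $s \mid (n,k)$.

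Next I would invoke Lemma~\ref{l:eqval}, which is precisely where the hypothesis $n > T(k)$ enters. It guarantees that $u$ contains at least $k$ coordinates equal to some common positive value, which we write as $a + r/s$ with $a \geq 0$, and at least $k$ coordinates equal to some common negative value, which we write as $-b + r/s$ with $b \geq 1$. Applying Lemma~\ref{l:notzerosum} with these parameters now yields that, since $W^{T}u$ is nowhere zero, the quadruple $(a,b,r,s)$ must belong to $B(n,k)$.

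Finally, because $u$ has at least $k$ positions carrying the value $a + r/s$, the coordinate of $W^{T}u$ indexed by any $k$-subset of such positions equals $k(a + r/s)$; symmetrically, summing over $k$ positions of value $-b + r/s$ yields the coordinate $-k(b - r/s)$. Consequently
\[
\|W^{T} u\|_{\infty} \;\geq\; \max\bigl\{\, k(a + \tfrac{r}{s}),\; k(b - \tfrac{r}{s}) \,\bigr\} \;\geq\; N(n,k),
\]
the last inequality following from the definition of $N(n,k)$ as an infimum over $B(n,k)$. Therefore $m(1,J(n,k)) = \|W^{T}u\|_{\infty}+1 \geq N(n,k)+1$, as required.

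The argument is essentially assembly: all the nontrivial work has already been carried out in the fractional-part constraint (Lemma~\ref{l:samefrac}), the mass-concentration statement (Lemma~\ref{l:eqval}, the quantitative heart of the bound), and the nonvanishing criterion (Lemma~\ref{l:notzerosum}). The only subtlety to watch is that one is merely selecting \emph{some} pair of repeated positive and negative values supplied by Lemma~\ref{l:eqval}; since the minimum in the definition of $N(n,k)$ is taken over all admissible quadruples in $B(n,k)$, the resulting lower bound is insensitive to that choice.
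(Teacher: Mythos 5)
Your proof is correct and follows essentially the same route as the paper's: take a norm-minimizing NZI eigenvector $W^Tu$, invoke Lemma~\ref{l:eqval} (where $n>T(k)$ is used) to extract $k$ repeated positive and $k$ repeated negative coordinates of $u$, apply Lemma~\ref{l:notzerosum} to place $(a,b,r,s)$ in $B(n,k)$, and bound $\|W^Tu\|_{\infty}$ below by $\max\{k(a+\frac{r}{s}),k(b-\frac{r}{s})\}\geq N(n,k)$. Your version is, if anything, slightly more explicit than the paper's about why the norm dominates this maximum.
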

\begin{proof}
   In view of Theorem \ref{TFirsteigen} consider the vector $u$, $u\in U(n)$ such that $W^Tu$ is a NZI $\theta_1(J(n,k))$-eigenvector of $J(n,k)$ and $\|W^Tu\|_{\infty}+1=m(1,J(n,k))$. By Lemma~\ref{l:eqval} we have that there are $k$ positions of $u$ with the value $a+\frac{r}{s}$ and $k$ positions with the value $-b+\frac{r}{s}$ for some $b\ne 0, (r,s)=1, 0 \leq r < s, s \text { is a divisor of } (n,k)$. 
    By Lemma~\ref{l:notzerosum} we have that $\frac{(bs-r)k}{s(a+b)}$ is not an integer. 
    Therefore, $(a,b,r,s) \in B(n,k)$. On the other hand, $\|W^Tu\|_{\infty}$ is not less than $\max\{k(a+\frac{r}{s}),k(b-\frac{r}{s})\}$, hence, $\|W^Tu\|_{\infty} \geq N(n,k)$.
\end{proof}

\begin{corollary}\label{c:shbound} 
Let $(a,b,r,s)$ be in $M(n,k)$. If $n>T(k)$ and $\frac{(bs-r)n}{s(a+b)}$ is an integer, then $m(1,J(n,k))=N(n,k)+1$.
\end{corollary}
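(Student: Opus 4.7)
The plan is to establish the matching upper bound $m(1,J(n,k))\le N(n,k)+1$; together with Theorem~\ref{t:bound} this gives the claimed equality. By Theorem~\ref{TFirsteigen}.2 it suffices to exhibit a real vector $u$ indexed by the vertices of $J(n,1)$ with $\sum_{i=1}^{n} u_i=0$ such that $W^{T}u$ is a nowhere-zero integer vector of infinity norm at most $N(n,k)$.

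Given $(a,b,r,s)\in M(n,k)$, I would take the two-valued ansatz suggested by Lemma~\ref{l:notzerosum}: let $u$ have $x$ coordinates equal to $a+\frac{r}{s}$ and $n-x$ coordinates equal to $-b+\frac{r}{s}$. The sum-zero condition forces
\[
  x=\frac{n(bs-r)}{s(a+b)},
\]
which is a nonnegative integer by the hypothesis $\frac{(bs-r)n}{s(a+b)}\in\mathbb{Z}$; the inequalities $b\ge 1$, $0\le r<s$, $(r,s)=1$, together with $(a,b,r,s)\in B(n,k)$, rule out the degenerate values $x=0$ and $x=n$, so both coordinate values actually occur. Each entry of $W^{T}u$ is a sum of $k$ coordinates of $u$; if such a sum picks $i$ of the positive positions and $k-i$ of the negative ones, its value is
\[
  f(i)=i(a+b)-kb+\tfrac{kr}{s}.
\]
Since $s$ divides $(n,k)$ and therefore $k$, every $f(i)$ is an integer, and the condition $(a,b,r,s)\in B(n,k)$ states precisely that the unique real root of $f$, namely $i=\tfrac{(bs-r)k}{s(a+b)}$, is not an integer. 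Hence $f(i)\ne 0$ for every integer $i\in\{0,\dots,k\}$, so $W^{T}u$ is NZI. Finally $f$ is linear with $f(0)=-k(b-\tfrac{r}{s})<0$ and $f(k)=k(a+\tfrac{r}{s})\ge 0$, so $|f(i)|$ on $[0,k]$ is maximized at an endpoint, yielding
\[
  \|W^{T}u\|_{\infty}\le\max\bigl\{k(a+\tfrac{r}{s}),\,k(b-\tfrac{r}{s})\bigr\}=N(n,k),
\]
where the final equality is the defining property of $M(n,k)$. This is the desired upper bound.

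The argument is essentially bookkeeping once the two-value ansatz is written down. The only mild subtlety is the corner-case verification that both coordinate values are really present; $x=0$ would force $bs=r<s$, impossible for $b\ge 1$, while $x=n$ forces $a=r=0$ and then $(r,s)=1$ gives $s=1$, in which case $\frac{(bs-r)k}{s(a+b)}=k$ is an integer, contradicting $(a,b,r,s)\in B(n,k)$. The hypothesis $n>T(k)$ is used only through Theorem~\ref{t:bound} for the matching lower bound; the upper-bound construction itself does not require it.
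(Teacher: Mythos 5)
Your proof is correct and follows essentially the same route as the paper: both take the two-valued vector with $\frac{(bs-r)n}{s(a+b)}$ coordinates equal to $a+\frac{r}{s}$ and the rest equal to $-b+\frac{r}{s}$, and combine the resulting upper bound with Theorem~\ref{t:bound}. Your write-up is actually more careful than the paper's, which omits the explicit verification that $W^{T}u$ is nowhere-zero and the corner-case check that both values occur.
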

\begin{proof}
    Take the vector $u$ such that it has $\frac{(bs-r)n}{s(a+b)}$ positions with the value $(a+\frac{r}{s})$ and $\frac{(as+r)n}{s(a+b)}$ positions with the value $(-b+\frac{r}{s})$.%!$(b-\frac{r}{s})->(-b+\frac{r}{s}),
% b-\frac{r}{s} cannot be negative

 The norm of the vector $W^Tu$ equals $\max\{k(a+\frac{r}{s}),k(b-\frac{r}{s})\}=N(n,k)$. %! the same mistake here
    On the other hand,
    by Theorem~\ref{t:bound} we have $m(1,J(n,k)) \geq N(n,k)+1$, hence, $m(1,J(n,k)) = N(n,k)+1$
\end{proof}

\begin{corollary}\label{c:coprime}
    Let $\gamma$ be the smallest positive integer that does not divide $k$, $n>T(k)$, $(n,k)=1$ and $n$ be divisible by $\gamma$. We have that
    
 1. If $\gamma=2$ then $m(1,J(n,k))=k+1$.  
 
 2. If $\gamma>2$ then $m(1,J(n,k))=(\lfloor\frac{\gamma}{2}\rfloor+1)k+1$. 
\end{corollary}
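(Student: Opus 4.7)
The plan is to apply Corollary \ref{c:shbound}, so I must (i) compute $N(n,k)$ explicitly under the stated hypotheses, (ii) exhibit an element of $M(n,k)$, and (iii) verify the divisibility condition that $\frac{(bs-r)n}{s(a+b)}$ is an integer.

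First I would exploit $(n,k)=1$: since every $(a,b,r,s)\in B(n,k)$ has $s$ dividing $(n,k)=1$, this forces $s=1$ and hence $r=0$. The definition of $B(n,k)$ thus collapses to pairs $(a,b,0,1)$ with $a\geq 0$, $b\geq 1$ and $(a+b)\nmid bk$; the case $a=0$ is impossible since $b\mid bk$ always, so in fact $a,b\geq 1$. Accordingly,
\[
N(n,k) \;=\; k\cdot \min\bigl\{\max\{a,b\}:a,b\geq 1,\ (a+b)\nmid bk\bigr\}.
\]

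The heart of the proof is evaluating this minimum; I claim it equals $1$ when $\gamma=2$ and $\lfloor\gamma/2\rfloor+1$ when $\gamma>2$. Setting $d=a+b$, the minimality of $\gamma$ yields $d\mid k$ for every $d<\gamma$, hence $d\mid bk$ and no such $d$ is admissible. Since $\max\{a,b\}\geq\lceil d/2\rceil$, it suffices to focus on $d=\gamma$. A standard argument shows $\gamma$ is a prime power (otherwise $\gamma=uv$ with coprime $1<u,v<\gamma$ gives $\gamma=uv\mid k$, a contradiction); writing $\gamma=p^e$, we have $\gcd(\gamma,k)=p^{e-1}$ and $\gamma\mid bk$ iff $p\mid b$. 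For $\gamma=2$, the pair $(a,b)=(1,1)$ has $d=2\nmid k$, giving minimum $1$. For odd $\gamma=p^e\geq 3$, the pair $(a,b)=((\gamma-1)/2,(\gamma+1)/2)$ is admissible because $(\gamma+1)/2\not\equiv 0\pmod{p}$ (as $\gamma+1\equiv 1\pmod{p}$ and $p$ is odd), yielding minimum $(\gamma+1)/2=\lfloor\gamma/2\rfloor+1$. For $\gamma=2^e$ with $e\geq 2$, the balanced choice $b=\gamma/2$ is ruled out because $2\mid b$, while $b=\gamma/2+1$ is odd and thus admissible; this gives $(a,b)=(\gamma/2-1,\gamma/2+1)$ and $\max=\gamma/2+1=\lfloor\gamma/2\rfloor+1$. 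No $d>\gamma$ can beat this since $\lceil d/2\rceil\geq\lfloor\gamma/2\rfloor+1$.

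It remains to verify the integrality of $\frac{bn}{a+b}$. In each subcase $\gcd(b,a+b)=\gcd(b,\gamma)=1$: trivial for $\gamma=2$; follows from $\gcd((\gamma+1)/2,p^e)=1$ in the odd case; and holds because $\gamma/2+1$ is odd, hence coprime to $2^e$, in the remaining case. Since $\gamma\mid n$ by hypothesis, $\frac{bn}{\gamma}$ is an integer, so our tuple lies in $M(n,k)$ and meets the hypothesis of Corollary \ref{c:shbound}. The corollary then delivers $m(1,J(n,k))=N(n,k)+1$, which is $k+1$ in Case 1 and $(\lfloor\gamma/2\rfloor+1)k+1$ in Case 2.

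The main obstacle is the case analysis for even $\gamma$, where the naive balanced choice $a=b=\gamma/2$ is forced out of $B(n,k)$ and replaced by an asymmetric pair, accounting for the extra ``$+1$'' in the formula; identifying $\gamma$ as a prime power is the structural observation that makes all the divisibility checks go through uniformly.
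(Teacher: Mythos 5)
Your proof is correct, and its core coincides with the paper's: use $(n,k)=1$ to force $s=1$, $r=0$, so that $N(n,k)=k\cdot\min\{\max\{a,b\}\}$ over pairs with $(a+b)\nmid bk$, rule out $a+b<\gamma$ by minimality of $\gamma$, and show that at $a+b=\gamma$ the balanced pair is inadmissible, giving $\max\{a,b\}\ge\lfloor\gamma/2\rfloor+1$. You differ in two secondary ways. First, you exclude the balanced pair $a=b=\gamma/2$ by proving $\gamma$ is a prime power and reducing $\gamma\mid bk$ to $p\mid b$; the paper instead observes that $\gcd(a,b)>1$ with $a+b=\gamma$ forces $\frac{bk}{a+b}=\frac{(b/g)k}{\gamma/g}$ to be an integer, so admissible pairs with $a+b=\gamma$ are coprime and hence unbalanced. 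Both arguments are sound. Second, you obtain the matching upper bound by exhibiting an explicit tuple in $M(n,k)$ with $\frac{bn}{\gamma}\in\mathbb{Z}$ and invoking Corollary \ref{c:shbound} (a two-valued construction), whereas the paper imports the upper bound from the separate constructions of Propositions \ref{p:oddk} and \ref{p:upbound}. Your route is slightly more self-contained and has the merit of actually verifying the integrality hypothesis of Corollary \ref{c:shbound}, which the paper's proof does not need to check; the paper's route reuses constructions that also work when $\gamma\nmid n$.
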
 
\begin{proof}
    Let $u$ be a vector in $U(n)$ such  that $\|W^Tu\|_{\infty}+1=m(1,J(n,k))$.  If $\gamma>2$ then since $(n,k)=1$, $k$ is even and $n$ is odd, so due to Proposition~\ref{p:upbound} we have $m(1,J(n,k)) \leq (\lfloor\frac{\gamma}{2}\rfloor+1)k+1$. If $\gamma=2$  then  $k$ is odd and by condition of corollary $n$ is even, so $m(1,J(n,k)) \leq k+1$ by Proposition~\ref{p:oddk}. %!\|W^Tu\|_{\infty}-> m(1,J(n,k)), because m(1,J(n,k))=\|W^Tu\|_{\infty}+1
 Since $(n,k)=1$ the fractional part of the value in any position of $u$ equals $0$ and any $(a,b,r,s)$ in $M(n,k)$ has $r=0, s=1$. Let  $(a,b,r,s)$ be an quadruple from $M(n,k)$.
 We have that $\frac{bk}{a+b}$ is not an integer and $(a+b)$ is not a divisor of $k$. So we have that $a+b \geq \gamma$. Note that if $a+b=\gamma$, then $(a,b)=1$.
Indeed otherwise we have $\frac{a+b}{(a,b)}<\gamma$, so $\frac{bk}{a+b}$ is integer. We obtain that $(a,b,0,1)\notin B(n,k)$, which is a contradiction.

 On the other hand, if $a+b=\gamma$, then $\max\{a,b\} \geq \lfloor\frac{\gamma}{2}\rfloor+1$ (if $\gamma=2$ then $\max\{a,b\}=1$). If $a+b>\gamma$ then $\max\{a,b\}$ is also not less than $\lfloor\frac{\gamma}{2}\rfloor+1$ ($1$ in the case $\gamma=2$). These lower bounds hold for any $(a,b,0,1)\in M(n,k)$ and therefore $N(n,k) \geq (\lfloor\frac{\gamma}{2}\rfloor+1)k$ ($k$ if $\gamma=2$) and, hence, $m(1,J(n,k))=(\lfloor\frac{\gamma}{2}\rfloor+1)k+1$ ($k+1$ if $\gamma=2$).
\end{proof}

\begin{corollary}\label{c:lowerbound} %! added this corollary 
If $n>T(k)$ then $m(1,J(n,k))\geq k+1$.
\end{corollary}
\begin{proof}
Suppose that $m(1,J(n,k))<k+1$. 
Then by Theorem \ref{t:bound} we have 
$$N(n,k)\leq m(1,J(n,k))-1 <k,$$
i.e. there are integers $a\geq 0$, $b\geq 1$, $r\geq 0$ and a divisor $s$ of $(n,k)$  such that 

$$\frac{(bs-r)k}{s(a+b)} \text{ is not an integer,}$$ $$ max\{k(a+\frac{r}{s}), k(b-\frac{r}{s})\}<k.$$

The latter inequality implies that $a=0$ and $b=1$, which, combined with the fact that $s$ is a divisor of $k$ implies that
$\frac{(bs-r)k}{s(a+b)}=\frac{(bs-r)k}{s}$ is an integer, a contradiction.

\end{proof}

\begin{proposition}\label{p:nkodd}
    If $n$ is even, $k$ is odd and $n>T(k)$, then $m(1,J(n,k))=k+1$.
\end{proposition}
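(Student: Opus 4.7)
The plan is to prove matching upper and lower bounds for $m(1,J(n,k))$. The upper bound $m(1,J(n,k))\le k+1$ is immediate from Proposition~\ref{p:oddk}.1 in the regime $k$ odd and $n$ even, so essentially no work is needed on that side.

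For the lower bound, I would invoke Theorem~\ref{t:bound}: since $n>T(k)$, we already have $m(1,J(n,k))\ge N(n,k)+1$, so it suffices to prove $N(n,k)\ge k$. Unpacking the definition of $N(n,k)$, this reduces to the following statement: for every admissible quadruple $(a,b,r,s)\in B(n,k)$,
$$\max\left\{a+\tfrac{r}{s},\;b-\tfrac{r}{s}\right\}\ge 1.$$

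The whole content of the argument is then a short case analysis on $(a,b)$. Because $a\in\mathbb{Z}_{\ge 0}$, $b\in\mathbb{Z}_{\ge 1}$ and $0\le r/s<1$, the displayed maximum can fall strictly below $1$ only when $a=0$ and $b=1$; in every other case either $a+r/s\ge 1$ (when $a\ge 1$) or $b-r/s>1$ (when $b\ge 2$) automatically. In the exceptional case one computes
$$\frac{(bs-r)k}{s(a+b)}=\frac{(s-r)k}{s}=k-\frac{rk}{s},$$
and since $s\mid(n,k)$ in particular gives $s\mid k$, so $rk/s\in\mathbb{Z}$ and the whole expression is an integer. This contradicts $(0,1,r,s)\in B(n,k)$. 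Hence the exceptional case is empty, every tuple in $B(n,k)$ forces the maximum to be at least $1$, and therefore $N(n,k)\ge k$, which combined with Theorem~\ref{t:bound} yields $m(1,J(n,k))\ge k+1$.

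I do not expect any genuine obstacle. The only subtlety is noticing that the divisibility requirement "$s$ divides $(n,k)$" built into the definition of $B(n,k)$ is precisely what removes the borderline tuple $(0,1,r,s)$; the parity hypotheses "$n$ even, $k$ odd" enter the proof only through the upper-bound proposition and not through the lower-bound computation at all.
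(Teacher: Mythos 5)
Your proof is correct and follows essentially the same route as the paper: the upper bound from Proposition~\ref{p:oddk}.1, and the lower bound via Theorem~\ref{t:bound} by ruling out the tuple $(0,1,r,s)$ from $B(n,k)$ (since $s\mid k$ makes $\frac{(s-r)k}{s}$ an integer), which forces $N(n,k)\ge k$. The paper phrases this as ``$a\ge 1$ or $b\ge 2$, otherwise $a+b=1$ and the tuple is not in $B(n,k)$,'' which is exactly your case analysis.
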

\begin{proof}
    From Proposition~\ref{p:oddk} we have $m(1,J(n,k)) \leq k+1$ and from Corollary \ref{c:lowerbound} we have $m(1,J(n,k)) \geq k+1$.
\end{proof}

\begin{theorem}\label{p:jn3}
  If  $n>T(3)=63$, then we have that
  \[m(1,J(n,3))=
  \begin{cases}
      4,{ } n \text{ is even}\\
      6,{ } n \text{ is odd and } n=0,6\mbox{ mod }9\\
      7,{ } \text{otherwise}.
  \end{cases}
  \]
\end{theorem}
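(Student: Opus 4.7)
The plan is to split the argument by the parity of $n$ and the residue $n \bmod 9$. For $n$ even, the equality $m(1, J(n,3)) = 4$ is immediate from Proposition~\ref{p:nkodd} with $k = 3$.

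For odd $n$ I first give the upper-bound constructions. When $n \equiv 0 \pmod 9$, let $u$ have $5n/9$ coordinates equal to $4/3$ and $4n/9$ coordinates equal to $-5/3$; one checks $\sum_i u_i = 0$ and that any three coordinates sum to one of $4,\,1,\,-2,\,-5$, so $W^Tu$ is NZI with $\|W^Tu\|_\infty = 5$ and $m \leq 6$. When $n \equiv 6 \pmod 9$ I perturb this by a single coordinate: take $u$ with $(5n-3)/9$ copies of $4/3$, $(4n-6)/9$ copies of $-5/3$, and one copy of $-2/3$. A direct computation verifies $\sum u_i = 0$, and because at most one copy of $-2/3$ can appear in any triple, $W^Tu$ takes values in $\{4,1,-2,-5,2,-1,-4\}$, giving $m \leq 6$. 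The bound $m \leq 7$ for the remaining odd residues is part of Proposition~\ref{p:oddk}.

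The heart of the proof is the matching lower bound. Assume $W^Tu$ is NZI with $\|W^Tu\|_\infty \leq M$ for some $M \in \{4,5\}$. By Lemma~\ref{l:samefrac} every entry of $u$ has the same fractional part $r/s$ with $s \mid (n,3)$, so $r/s \in \{0,\,1/3,\,2/3\}$, with $1/3$ and $2/3$ available only when $3 \mid n$. Since $n > T(3) = 63$, Lemma~\ref{l:eqval} supplies a positive value $v^+ = a + r/s$ and a negative value $v^- = -b + r/s$ each occurring at least three times in $u$. The bounds $3 v^+ \leq M$ and $-3 v^- \leq M$ leave only finitely many candidate pairs $(v^+, v^-)$. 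For each such pair I would either exhibit a triple of coordinates (of the form $(v^+,v^+,v^-)$, $(v^+,v^-,v^-)$, or involving a further value $w$ present in $u$) whose sum vanishes, hence forcing a zero entry of $W^Tu$; or extract from the occupancy equations $\sum_c x_c = n$ and $\sum_c c\,x_c = 0$ a divisibility condition on $n$ incompatible with its assumed residue.

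For $M = 4$ every configuration is killed by a bad triple (for instance $(1/3,1/3,-2/3) = 0$, $(4/3,-2/3,-2/3) = 0$, and their counterparts in the fractional-$2/3$ class, together with $(1,-1,0)$, $(1,1,-2)$, and $(-1,-1,2)$ in the integer case), which yields $m \geq 6$ for every odd $n$. For $M = 5$ only two families survive triple-elimination: the integer case with values in $\{-3,-1,1,3\}$ and $x_3, x_{-3} \leq 1$, which is ruled out for odd $n$ by a parity check on $x_1 - x_{-1} + 3(x_3 - x_{-3}) = 0$ together with $x_1 + x_{-1} + x_3 + x_{-3} = n$ (these force $n$ even); and the fractional sub-cases with $(v^+, v^-) \in \{(4/3,-5/3),\,(5/3,-4/3)\}$, whose two-valued forms require $9 \mid n$ and whose only admissible extensions by an additional value of bounded multiplicity (namely $-2/3$, $2/3$, $7/3$, or $-8/3$ appearing exactly once) require $n \equiv 6 \pmod 9$. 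Hence for odd $n$ with $n \not\equiv 0, 6 \pmod 9$ no configuration exists and $m \geq 7$. The main technical obstacle is precisely this $M = 5$ enumeration: one must exhaust every pair $(v^+, v^-)$ together with each admissible extra value $w$ and its multiplicity (bounded above by the triple-sum constraint), and verify that the resulting sum/count equations admit solutions only when $9 \mid n$ or $n \equiv 6 \pmod 9$.
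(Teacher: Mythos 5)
Your overall architecture matches the paper's: the even case via Proposition~\ref{p:nkodd}, explicit vectors with values $\{4/3,-5/3\}$ (plus one $-2/3$) for the upper bounds when $n\equiv 0,6 \pmod 9$ (these are exactly the negatives of the paper's vectors in steps 2b--2c), and a lower bound driven by Lemma~\ref{l:samefrac} and Lemma~\ref{l:eqval} followed by a finite case analysis on the pair of values each occurring at least three times. Your integer-case parity argument ($x_1-x_{-1}+3(x_3-x_{-3})=0$ forcing $n$ even) and your treatment of the $(4/3,-5/3)$ family are correct in substance, modulo a typo: in that family the admissible extra values are $-2/3$ and $7/3$ (each at most once, and not simultaneously, since $-2/3+7/3-5/3=0$); the values $2/3$ and $-7/3$ belong to the mirror family, and $-8/3$ is excluded outright because $-8/3-5/3-5/3=-6$.

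The genuine gap is in your claim that for $M=5$ ``only two families survive triple-elimination.'' The pair $(v^+,v^-)=(1/3,-5/3)$ (equivalently $(5/3,-1/3)$ in the mirror class) also survives: the sums $1/3+1/3+1/3$, $1/3+1/3-5/3$, $1/3-5/3-5/3$ and $-5$ are all nonzero, so no bad triple kills it, and it is nowhere addressed in your enumeration. It must be eliminated by the occupancy equations: the only further values compatible with the norm bound and the absence of zero triples are $7/3$ (at most twice) and $13/3$ (at most once, and not together with $7/3$), and then $x-5y+7p+13q=0$ with $x+y+p+q=n$ gives $n=6(y-p-2q)$, forcing $n$ even --- which is exactly how the paper disposes of this configuration (its case $b=1$ in step 2a). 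Without this the value $7$ for odd $n\equiv 3\pmod 9$ is not established. A smaller instance of the same slip occurs at $M=4$: the pure $\pm 1$ configuration is not killed by any of your listed bad triples either; it dies only because $x_1=x_{-1}=n/2$ forces $n$ even. Both repairs use tools you already invoke, so the proof is completable, but as written the case analysis is incomplete.
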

\begin{proof}
Let $u$ be a  vector in $U(n)$ such that $\|W^Tu\|_{\infty}+1=m(1,J(n,k))$. 

The case of even $n$ is a particular case of Proposition~\ref{p:nkodd}. In what follows we assume that $n$ is odd. By Proposition~\ref{p:oddk} we have $\|W^Tu\|_{\infty} \leq 6$. 

1. Suppose the elements of $u$ are integers.
By Lemma~\ref{l:eqval} there are at least $3$ elements of $u$ equal to $a$ and at least $3$ elements equal to $-b$. If $a \geq 2$ or $b \geq 2$ then we obtain desired inequality $\|W^Tu\|_{\infty} \geq 6$. Therefore, we have  $a=1$ and $b=1$. Since $n$ is odd and $\sum\limits_{i=1}^n u_i=0$, there is an element of $u$ with an even value $c$. This value cannot be $0,2,-2$, otherwise
 we have three elements in $u$ with zero sum and the vector $W^Tu$ has zero elements. Hence, $c \geq 4$ or $c \leq -4$. In both cases, the bound $\|W^Tu\|_{\infty} \geq 6$ holds.
 %and, hence, $m(1,J(n,k)) \geq 7$. In the other hand, by Proposition~\ref{p:oddk} we have $m(1,J(n,k)) \leq 7$. 

2. Suppose the elements of $u$ have nonzero fractional parts. By Lemma \ref{l:samefrac} this fractional part is $\frac{1}{3}$ or $\frac{2}{3}$ and we necessarily have that $n$ is divisible by $3$. Without loss of generality, we can assume that this fractional part equals $\frac{2}{3}$ (otherwise we take the vector $-u$). By Lemma~\ref{l:eqval} the vector $u$ has $3$ elements with some positive value $a+\frac{2}{3}$ and $3$ elements  with some negative value $-b+\frac{2}{3}$. 
If $a=0$, then $b$ cannot be $1$ or $2$, otherwise there are three elements of $u$ with the sum equals $0$. If $b \geq 3$, then $\|W^Tu\|_{\infty} \geq 7$.
If $a \geq 2$, then $\|W^Tu\|_{\infty} \geq 8$. Both cases contradict $\|W^Tu\|_{\infty} \leq 6$. So we have $a=1$ and hence, $\|W^Tu\|_{\infty} \geq 5$. Also we have $b=1$ or $b=2$, otherwise $\|W^Tu\|_{\infty} \geq 7$.

2a. Let $n$ be $3 \mod 9$. We also recall that $n$ is odd and there are at least $3$ elements in $u$ equal $\frac{5}{3}$ and at least $3$ elements equal $-b+\frac{2}{3}$, where $b$ is $1$ or $2$. We show that $\|W^Tu\|_{\infty} \geq 6$.

Consider case $b=1$. 
Since $\sum\limits_{i=1}\limits^{n} u_i=0$ and $n$ is odd, there is an element $\frac{c}{3}$ of $u$ for some even $c$. If $c \geq 8$ or $c \leq -16$ then $\|W^Tu\|_{\infty} \geq 6$. If $c=2,-4,-10$ then there are three elements in $u$ having zero sum, a contradiction.  

Consider case $b=2$. 
If all elements of $u$ are $\frac{5}{3}$ or $-\frac{4}{3}$ then $n$ is divisible by $9$, which is not the case. So there is at least one element of $u$ that differs from $\frac{5}{3}$ or $-\frac{4}{3}$. If there is element that is not less than $\frac{8}{3}$ or element that is not more than $-\frac{10}{3}$ or two elements that equals $-\frac{7}{3}$ then $\|W^Tu\|_{\infty} \geq 6$. 
If in vector $u$ there is an element $-\frac{1}{3}$ or two elements equals $\frac{2}{3}$ or two elements with the values $\frac{2}{3}$ and $-\frac{7}{3}$ then there are three elements in $u$ with zero sum. It remains to consider the case when one element of $u$ equals $\frac{c}{3}$, where $c$ is $2$ or $-7$, any other element is $\frac{5}{3}$ or $-\frac{4}{3}$. 
%If there is element not less than $\frac{8}{3}$ or not more than $-\frac{10}{3}$ then $\|W^Tu\|_{\infty} \geq 6$. So any such element equals $\frac{2}{3}$ or $-\frac{7}{3}$. 
Denote by $x$ the number of elements $\frac{5}{3}$ and by $y$ the number of elements $-\frac{4}{3}$ in $u$. Then the sum of all elements equals $\frac{5x-4y+c}{3}$, where $x+y=n-1$. Since $\sum\limits_{i=1}^n u_i=0$ we have $x=\frac{4n-4-c}{9}$, i.e. $x=\frac{4n-6}{9}$ or $x=\frac{4n+3}{9}$. In both cases we have $n=6 \mod 9$, a contradiction. We conclude that $\|W^Tu\|_{\infty} \geq 6$ for the case when $n=3 \mod 9$.

2b. If $n=0\mbox{ mod }9$ take the vector 
$$u^T=(\frac{5}{3},\ldots,\frac{5}{3},-\frac{4}{3}, \ldots, -\frac{4}{3}),$$
of length $n$, where $\frac{5}{3}$ is repeated $\frac{4n}{9}$ times. Hence, $\|W^Tu\|_{\infty}=5$.

2c. If $n=6\mbox{ mod }9$ take vector 
$$u^T=(\frac{2}{3},\frac{5}{3},\ldots,\frac{5}{3},-\frac{4}{3}, \ldots, -\frac{4}{3}),$$
of length $n$, where $\frac{5}{3}$ is repeated $\frac{4n-6}{9}$ times, $-\frac{4}{3}$ is repeated $\frac{5n-3}{9}$ times and $u_1=\frac{2}{3}$. Hence, $\|W^Tu\|_{\infty}=5$.

\end{proof}

\subsection{Minimum of $L_{\infty}$ norm of nowhere-zero integer eigenvectors with the first eigenvalue for block graphs of STSs}

\begin{theorem}
Let $S$ be a Steiner triple system of order $n$, $n>7$. If $n=1 \mbox{ mod } 4$ then $m(1,\Gamma_S)\leq 4$  and if  $n=3 \mbox{ mod } 4$ then
$m(1,\Gamma_S)\leq 5$.
\end{theorem}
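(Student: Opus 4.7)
By Theorem~\ref{TFirsteigen}.3, every $\theta_1(\Gamma_S)$-eigenvector of $\Gamma_S$ has the form $W_S^Tu$ for some real-valued $u$ indexed by the points of $S$ with $\sum_i u_i=0$, whose coordinate at a block $B$ equals $\sum_{x\in B}u_x$. The plan, in each case, is to construct an integer vector $u$ with $\sum_i u_i=0$ all of whose block sums are nonzero integers of absolute value at most $3$ (for $n\equiv 1\pmod 4$) or at most $4$ (for $n\equiv 3\pmod 4$); that gives $W_S^Tu$ as the required NZI eigenvector, so $m(1,\Gamma_S)\le 4$ or $5$ respectively.

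For $n\equiv 1\pmod 4$ I would pick any point $p$ and use that the $(n-1)/2$ blocks through $p$ partition $V\setminus\{p\}$ into an \emph{even} number of pairs. Splitting these pairs into two equinumerous classes, setting $u_p=0$, and giving both points of each pair in the first (resp.\ second) class the value $+1$ (resp.\ $-1$), I get $\sum_i u_i=0$, block sums $\pm 2$ through $p$, and block sums in $\{\pm 1,\pm 3\}$ elsewhere.

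For $n\equiv 3\pmod 4$ the pairing through a single point has odd size and balancing by $\pm 1$ alone is impossible; this is the main obstacle. My idea is to absorb the parity defect on a single block. Fix a block $B=\{p,q,s\}$ and set $u_p=3$, $u_q=-2$, $u_s=1$. On $V\setminus\{p,q,s\}$ (which has $n-3\equiv 0\pmod 4$ points) I will assign $(n-5)/2$ values $+1$ and $(n-1)/2$ values $-1$; a direct count then forces $\sum_i u_i=0$. The block sum on $B$ is $2$; blocks through $s$ other than $B$ give $1+u_a+u_b\in\{-1,1,3\}$; blocks disjoint from $B$ give three $\pm 1$ sums lying in $\{\pm 1,\pm 3\}$. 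The only potential failures are blocks through $p$, whose sums $3+u_a+u_b\in\{1,3,5\}$ exceed $4$ precisely when $(u_a,u_b)=(+1,+1)$, and blocks through $q$, whose sums $-2+u_a+u_b\in\{-4,-2,0\}$ vanish in exactly the same case.

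What remains is to choose the $+1$-set $I'\subseteq V\setminus\{p,q,s\}$ of size $(n-5)/2$ so that no pair coming from a block through $p$ or $q$ (other than $B$) lies inside $I'$. Let $\mathcal P_p,\mathcal P_q$ be the two induced perfect matchings on $V\setminus\{p,q,s\}$; they share no edge, because a common pair $\{a,b\}$ would put $\{a,b\}$ into two distinct blocks of $S$. Hence $G=\mathcal P_p\cup\mathcal P_q$ is a simple $2$-regular graph, i.e.\ a disjoint union of even cycles on $n-3$ vertices. A maximum independent set of $G$ has size $(n-3)/2$, and removing any single vertex gives an independent set of the required size $(n-5)/2$. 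Taking this as $I'$ rules out the bad $(+1,+1)$-pairs, and completes the construction.
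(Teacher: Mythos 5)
Your proposal is correct and follows essentially the same route as the paper: the $n\equiv 1\pmod 4$ case is identical, and for $n\equiv 3\pmod 4$ both proofs place special values on one distinguished block and then exploit that the two perfect matchings on $V\setminus B$ induced by blocks through two of its points form a disjoint union of even cycles, choosing signs so that the one dangerous adjacent same-sign pair never occurs. The only cosmetic differences are the sign convention on the distinguished block ($3,-2,1$ versus the paper's $-1,2,-3$) and that you phrase the cycle assignment as picking an independent set of size $\frac{n-5}{2}$ where the paper writes down an explicit alternating labelling.
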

\begin{proof}

Let $n$ be  such that $n=1\mbox{ mod } 4$. Suppose $S$ is a STS with all triples containing the point $1$ being $\{1,2,3\},\ldots,\{1,n-1,n\}$.
We set the vector $u$ as follows: $u_1=0$, $u_i=1$ for $2\leq i\leq \frac{n+1}{2}$ and $u_i=-1$ for $\frac{n+3}{2}\leq i\leq n$. The vector  $W_S^Tu$ is a $\theta_1(\Gamma_S)$-eigenvector of $\Gamma_S$ due to Theorem \ref{TFirsteigen}.3. The elements of $W_S^Tu$ are $u_i+u_j+u_k$ if $\{i,j,k\}$ is a triple of $S$. If $\{i,j,k\}$ contains $1$, then by the choice of u, we see that $(W_S^Tu)_{\{i,j,k\}}$ is either $2$ or $-2$.
Otherwise, $u_i+u_j+u_k$ is the sum of three numbers with absolute  value $1$, therefore $\|W_S^Tu\|_{\infty}\leq 3$.

Let $n$ be such that $n=3\mbox{ mod } 4$. Without restriction of generality,   $\{1,2,3\}$ is a triple in $S$. Set the vector $u$ as follows: $u_1=-1$, $u_2=2$, $u_3=-3$. We will arrange the remaining $\frac{n-5}{2}$ values $-1$ and $\frac{n-1}{2}$ values $1$ in the remaining $n$ positions of $u$ according to the structure of $S$. 

 We consider an auxiliary graph on the vertices, which are the points $\{4,\ldots,n\}$. The edges are the pairs obtained from the all triples of $S$ containing $2$ or $3$, excluding $\{1,2,3\}$, by removing the points $2$ and $3$. The edges are labeled "2"\,or "3"\,which is the point that completes the edge, i.e. pair of points, to a triple of $S$. From the definition of a Steiner triple system, we see that the graph is the union of even length cycles that partition $\{4,\ldots,n\}$, where labels, i.e. 2 and 3, for any two incident edges are different.

For a vector $u$ we set its remaining elements (with indices of the vertices of the auxiliary graph) to $1$ and $-1$ as follows. We distinguish one cycle $i_1,\ldots,i_{2l}$. Choose a path $i_1$, $i_2$, $i_3$ in the cycle and set values $u_{i_1}=u_{i_2}=u_{i_3}=1$. 
The remaining values of $u$ in this cycle are alternating $-1$ and $1$: $u_{i_4}=-1$, $u_{i_5}=1$, $u_{i_6}=-1$,\ldots, $u_{i_{2l}}=-1$.
  For any other cycle $j_1,\ldots,j_{2m}$ we set the values of $u$ in the alternating way: $u_{j_{2s+1}}=-u_{j_{2(s+1)}}=1$, for $s=0,\ldots, m-1$.
By the choice of the values of $u$ on the points $\{4,\ldots,n\}$, for any two adjacent vertices $a$, $b$ in the auxiliary graph  we have that  $u_a+u_b=0$ or $2$.

Since $\sum\limits_{i=1}^{n}u_i=0$, $u$ is a (-1)-eigenvector of $K_n$. Now consider the vector $W^Tu$, which is a $\theta_1(\Gamma_S)$-eigenvector of $\Gamma_S$ by Theorem \ref{TFirsteigen}.3.
For a triple $\{i,j,k\}$ of $S$ such that $\{i,j,k\}\cap \{2,3\}=\emptyset$,  we see that $u_i+u_j+u_k$  is the sum of elements whose absolute values are $1$ (note that $u_1=1$ and $|u_i|=1,i=4\ldots,n$). So $u_i+u_j+u_k$ is nonzero with the absolute value less or equal to $3$.
Let $\{i,j,k\}$ be a triple of $S$, such that $k=2$ or $3$ and $i, j\geq 4$.  
Since $u_2=2$, $u_3=-3$ and $u_i+u_j$ is either $0$ or $2$, the sum $u_k+u_i+u_j$ is nonzero with the absolute value less or equal to $4$. 
For the last remaining case of triple $\{1,2,3\}$ in $S$ we have $u_1+u_2+u_3=-2$.
We see that $\|W_S^T u\|_{\infty}\leq 4$.

\end{proof}

\section{Nowhere zero flows for families of classic Steiner triple system}
Throughout this section, we use terms of flows in Steiner triple systems in the sense of work \cite{Akbari} rather than terms of eigenvectors.

\subsection{Flows for some classic STSs}

\begin{lemma}\label{lemmaresolve}\cite[Lemma 1.4]{Akbari2017} Let $S$ be a resolvable STS of order $n$. If $n=1\mbox{ mod }4$, $S$ has a $2$-flow, otherwise it has a $3$-flow.
\end{lemma}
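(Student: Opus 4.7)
The plan is to exploit the parallel class decomposition of a resolvable STS. By definition, a resolvable STS $S$ of order $n$ has its $\frac{n(n-1)}{6}$ blocks partitioned into $r=\frac{n-1}{2}$ parallel classes $P_1,\ldots,P_r$, where each $P_i$ is a partition of the point set into $\frac{n}{3}$ pairwise disjoint triples. The key observation is that each point $p$ lies in exactly one block of each class $P_i$.

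First I would consider only vectors $u$ that are constant on each parallel class: fix $\epsilon_1,\ldots,\epsilon_r \in \mathbb{Z}$ and set $u_B = \epsilon_i$ for every $B\in P_i$. Then for any point $p$, $(W_S u)_p = \sum_{i=1}^r \epsilon_i$, since $p$ is incident with exactly one block from each class. Thus $u$ is a nowhere-zero flow if and only if all $\epsilon_i$ are nonzero and $\sum_i \epsilon_i = 0$. This collapses the $n$ linear incidence constraints into a single equation, which is the core of the argument.

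For $n\equiv 1\pmod 4$ the number $r=\frac{n-1}{2}$ is even, so taking half of the $\epsilon_i$ equal to $+1$ and the other half equal to $-1$ gives $\|u\|_\infty = 1$, i.e.\ a $2$-flow. For $n\equiv 3\pmod 4$, $r$ is odd and no $\pm 1$ assignment can sum to zero, so one must use at least one value of absolute value $2$. I would set $\epsilon_1 = -2$, assign $+1$ to $\frac{n+1}{4}$ of the remaining classes and $-1$ to $\frac{n-7}{4}$ of them; this exhausts all $r$ classes (since $1 + \frac{n+1}{4}+\frac{n-7}{4} = \frac{n-1}{2}$) and produces sum $-2 + \frac{n+1}{4} - \frac{n-7}{4} = 0$, with $\|u\|_\infty = 2$, i.e.\ a $3$-flow.

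The main potential obstacle is small-order boundary behavior: the construction for $n\equiv 3\pmod 4$ requires $\frac{n-7}{4}\geq 0$, i.e.\ $n\geq 7$, which is automatic for resolvable STSs with $n\equiv 3\pmod 4$ (the smallest such order is $15$, the Kirkman system). The only truly degenerate case $n=3$ admits no nowhere-zero flow at all, and the lemma is understood to refer to nontrivial systems. Apart from this caveat, the argument is essentially the observation above; no deeper combinatorial input is needed beyond the parallel class structure.
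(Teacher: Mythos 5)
Your proposal is correct and follows essentially the same route as the paper: both arguments assign a constant nonzero integer to each parallel class so that the point-incidence condition collapses to the single equation ``sum of class values equals $0$'', using values $\pm 1$ when $\frac{n-1}{2}$ is even and introducing a value of absolute value $2$ when it is odd. The paper's specific partition for $n\equiv 3 \pmod 4$ (classes of sizes $\frac{n-7}{4}$, $\frac{n-3}{4}$, $2$ with values $2$, $-2$, $1$) differs only cosmetically from your choice of one class valued $-2$ and the rest $\pm 1$.
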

%\begin{proof} If  $n=1\mbox{ mod }4$, we split the $\frac{n-1}{2}$ parallel classes into two equal sized subsets and set $v_T=1$ if $T$ belongs to any parallel class from the first subset and %$-1$ otherwise.
%It is easy to see that $v$ is $2$-flow for $S$.

%If  $n=3\mbox{ mod }4$, we part the $\frac{n-1}{2}$ parallel classes into three subsets  sets of sizes $\frac{n-7}{4}$ , $\frac{n-3}{4}$ and $2$ and set the elements of vector $v$ to be $2$, %$-2$ and $1$ if the triple belongs to parallel classes in these sets respectively.
%It is easy to see that $v$ is $3$-flow for $S$.

%\end{proof}

\begin{proposition}
1. Let $S$ be the Steiner triple system formed by the supports of weight three codewords of the Hamming code of length $2^r-1$, $r\geq 4$. Then $S$ admits a $3$-flow, but does not admit a $2$-flow.

2. Let $S$ be the original Bose Steiner triple system of order $3p$ constructed from the latin square of $Z_{p}$, where $p$ is an odd prime. Then $S$ has a $2$- or a $3$-flow. 
\end{proposition}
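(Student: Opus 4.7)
My approach handles the two statements separately.

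For Statement~1, the lower bound $m(2,\Gamma_S) \geq 3$ is an immediate parity count: each point of the Hamming STS $S = PG(r-1,2)$ lies in $(n-1)/2 = 2^{r-1} - 1$ triples, an odd number, so no $\pm 1$-valued vector can satisfy $W_S u = 0$ at any single point, ruling out a $2$-flow. For the upper bound I split on the parity of $r$. When $r$ is even, $3 \mid 2^r - 1$ and $PG(r-1, 2)$ is Kirkman-resolvable---one parallel class is the spread given by the cosets of $\FF_4^*/\FF_2^*$ in $\FF_{2^r}^*/\FF_2^*$, and the full resolution follows from subfield structure. Since $n = 2^r - 1 \equiv 3 \pmod 4$ for even $r$, Lemma~\ref{lemmaresolve} delivers a $3$-flow directly.

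For odd $r$, $PG(r-1, 2)$ is not resolvable, so I would exploit its Singer cyclicity instead. The Singer cycle $\sigma: x \mapsto \alpha x$ with $\alpha$ primitive in $\FF_{2^r}$ acts on the blocks; a stabilizer calculation shows every block-orbit has full length $n$, because a nontrivial stabilizer would cyclically permute the three points of a triple (a $2$-cycle is excluded since a fixed third point forces $\alpha^t = 1$), forcing $\alpha^{3t} = 1$ for some $0 < t < n$ and hence $3 \mid n$---impossible for odd $r$. Thus there are exactly $k = (n-1)/6 = (2^{r-1} - 1)/3$ Singer block-orbits, all of size $n$, and each point of $S$ lies in exactly three blocks of every orbit. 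Assigning a constant value $c_i \in \{\pm 1, \pm 2\}$ to all blocks of orbit $i$ produces a vector $u$ whose sum at each point equals $3 \sum_{i=1}^k c_i$; since $r \geq 5$ gives $k \geq 5$, I can choose the $c_i$ so that $\sum_i c_i = 0$ (for instance $(1, 1, 1, -1, -2)$ when $k = 5$, with pairs of opposite sign adjoined for larger $k$), producing the required $3$-flow.

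For Statement~2, the plan is to invoke resolvability of the Bose STS on $Z_p \times \{0, 1, 2\}$. The collection of vertical blocks $\{(a, 0), (a, 1), (a, 2)\}$ for $a \in Z_p$ is one parallel class; the remaining $(3p - 3)/2$ classes can be assembled $Z_p$-invariantly from orbits of transversal blocks. A straightforward row-by-row point count shows that any parallel class must contain equally many transversal blocks of each of the three row-types, together with $v$ vertical blocks satisfying $v \equiv p \pmod 3$; the concrete $Z_p$-invariant construction of Bose's original paper supplies these classes. With resolvability in hand, Lemma~\ref{lemmaresolve} completes the proof: $3p$ is odd, and $3p \equiv 1 \pmod 4$ precisely when $p \equiv 3 \pmod 4$ (yielding a $2$-flow), whereas $3p \equiv 3 \pmod 4$ precisely when $p \equiv 1 \pmod 4$ (yielding a $3$-flow). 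The main obstacle in both parts is the resolvability of the specific STSs: both are classical Kirkman constructions, but explicit parallel-class descriptions require careful subfield- or group-invariant bookkeeping.
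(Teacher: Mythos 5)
Your treatment of Statement 1 is correct. The lower bound is the same parity obstruction the paper uses. For even $r$ you and the paper both ultimately rest on the resolvability of the lines of $PG(r-1,2)$ plus Lemma~\ref{lemmaresolve}; be aware that the subfield $\FF_4$ only hands you a single spread, while the full resolution (a partition of \emph{all} lines into spreads) is a genuine theorem of Baker, which the paper cites rather than proves. For odd $r$ your route differs from the paper's: the paper invokes a cited result that cyclic STSs of order $1 \bmod 6$ admit $3$-flows, whereas you re-derive the needed special case directly from the Singer cycle. Your orbit computation is sound: for odd $r$ one has $3\nmid 2^r-1$, so every block orbit is full, each orbit is a $3$-regular $1$-design, and the number of orbits $k=(2^{r-1}-1)/3$ is an odd integer $\geq 5$, so constants $c_i\in\{\pm1,\pm2\}$ with $\sum_i c_i=0$ exist and yield a nowhere-zero vector of norm $2$. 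This is more self-contained than the paper's citation and is a legitimate alternative.

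Statement 2 is where your argument has a real gap. The entire content of this part is the resolvability of the Bose system, and your justification of it does not work. The $Z_p$-orbit of a transversal block $\{(a,i),(b,i),(c,i+1)\}$ consists of $p$ blocks that cover every point of row $i$ \emph{twice}, every point of row $i+1$ once, and row $i+2$ not at all; so $Z_p$-orbits of transversal blocks are not parallel classes, and combining one orbit based at each row gives a triple parallel class (every point covered three times), not a resolution. Bose's original paper constructs the triple system but no resolution, and your necessary-condition count (equally many blocks of each row-type, $v\equiv p \pmod 3$ vertical blocks) is not a construction. Resolvability of these Bose systems is in fact a recent, nontrivial result of Colbourn and Lusi, which is exactly what the paper cites before applying Lemma~\ref{lemmaresolve}; your final parity split $3p\equiv 1$ or $3 \pmod 4$ according to $p \bmod 4$ is correct, but it only becomes usable once resolvability is actually established rather than asserted.
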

\begin{proof}
1. If $r$ is even, then the Hamming STS of order $2^r-1$ is known to be resolvable, i.e. its blocks are parted into $\frac{2^r-2}{2}$ parallel classes \cite{Baker}.  
In this case, following Lemma \ref{lemmaresolve},  we see that a $3$-flow exists.
 
When $r$ is odd, then it is easy to see that $2^r$ is $2$ modulo $6$.
Any Hamming STS is cyclic, i.e. it has an automorphism of order being equal to the order of $S$. 
 By \cite[Theorem 3.6]{AMMW} if the order of a STS $S$ is $1$ modulo $6$ and $S$ is cyclic, then a $3$-flow for $S$ exists.  

Suppose a $2$-flow $v$ for Hamming STS $S$ exists, so $W_Sv=0$ and the elements of the $v$ are $+1$ and $-1$. This contradicts the fact that any row of $W_S$ has exactly  $\frac{n-1}{2}=\frac{2^r-1-1}{2}$ of ones, which is odd.

2.  The Steiner triple systems of order $9p$  constructed by 	Bose method   were recently shown to be resolvable by Colbourn and Lusi \cite{ColbournLusi}. The result follows from Lemma \ref{lemmaresolve}. 
\end{proof}

\subsection{Flows for Assmuss-Mattson construction}\label{S:AM}

Let us consider the Assmus-Mattson construction \cite{AM}.
Given a Steiner triple system $S$  of order $n$ with the pointset $\{1,\ldots,n\}$ and a point $i$, we denote by $\bar{i}$ the number $i+n$. For a function $\tau:S\rightarrow \{0,1\}$, we define the Assmuss-Mattson Steiner triple system of order $2n+1$ with point set $\{1,2,\dots, n, \bar{1},\bar{2}, \dots, \bar{n}, 2n+1\}$:

$${\bar S}=\bigcup_{\{i,j,k\}\in S}{P(\{i,j,k\})}\cup$$ 
$$\bigcup_{i=1,\dots, n}{\{i,\bar{i},2n+1\}},$$
where $$P(\{i,j,k\})=\{\{i,j,k\},\{i,\bar{j},\bar{k}\},\{\bar{i},j,\bar{k}\},\{\bar{i},\bar{j},k\}\},$$
if $\tau(\{i,j,k\})=0$ and  
$$P(\{i,j,k\})=\{\{\bar{i},j,k\},\{\bar{i},\bar{j},\bar{k}\},\{i,j,\bar{k}\},\{i,\bar{j},k\}\}$$
 otherwise.

Our goal is to construct a zero-sum $5$-flow for $\bar{S}$, i.e. to find a NZI vector $v$ such that $\sum\limits_{T\in \bar{S}, i\in T}{v_T}=0$ for any $i\in \{1,2,\dots, n, \bar{1},\bar{2}, \dots, \bar{n}, 2n+1\}$
and $\|v\|_{\infty}=4$.

For our following arguments we need several auxiliary statements.
 
	Let $a_1,a_2,a_3$ be pairwise distinct elements from $\{1,\dots,n\}$. We define a real-valued vector $g$ indexed by the triples in $P(\{a_1,a_2,a_3\})$ depending on $\tau(\{a_1,a_2,a_3\})$.	
	If  $\tau(\{a_1,a_2,a_3\})=0$, define the elements of $g$ (indexed by triples in $P$) as follows:  
	$$g_{\{a_1, a_2, a_3\}}=g_{\{a_1, \bar{a}_2, \bar{a}_3\}}=1, g_{\{\bar{a}_1, \bar{a}_2, a_3\}}=g_{\{\bar{a}_1, a_2, \bar{a}_3\}}=-1.$$ If $\tau(\{a_1,a_2,a_3\})=1$ define  $$g_{\{\bar{a}_1, a_2, a_3\}}=g_{\{\bar{a}_1, \bar{a}_2, \bar{a}_3\}}=-1, g_{\{a_1, a_2, \bar{a}_3\}}=g_{\{a_1, \bar{a}_2, a_3\}}=1.$$  

Consider the properties of  the introduced vector.

\begin{proposition}\label{P:zero} 
 The following holds for vector $g$:
 
\begin{equation}\label{eqgvec11}
\sum_{T'\in P(\{a_1,a_2,a_3\}), a_1\in T'}{g_{T'}}=2,
\end{equation}

\begin{equation}\label{eqgvec12}
\sum_{T'\in P(\{a_1,a_2,a_3\}), \bar{a}_1\in T'}{g_{T'}}=-2,\end{equation}

\begin{equation}\label{eqgvec2}\sum_{T'\in P(\{a_1,a_2,a_3\}), s\in T'}{g_{T'}}=0\mbox{ for }s\in \{a_2, a_3, \bar{a}_2, \bar{a}_3\}.\end{equation}

\end{proposition}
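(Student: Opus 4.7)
The proof is going to be a mechanical case analysis: the set $P(\{a_1,a_2,a_3\})$ contains only four triples, each carrying an explicitly prescribed value of $g\in\{+1,-1\}$, and for every point $s\in\{a_1,\bar a_1,a_2,\bar a_2,a_3,\bar a_3\}$ I will simply enumerate which of those four triples contain $s$ and sum the two (and only two) corresponding values. The key preliminary observation is that each of the six points $a_i,\bar a_i$ appears in exactly two of the four triples of $P(\{a_1,a_2,a_3\})$, so every sum in (\ref{eqgvec11})--(\ref{eqgvec2}) is a sum of exactly two entries from $\{+1,-1\}$ and must lie in $\{-2,0,2\}$.

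First I would dispose of the case $\tau(\{a_1,a_2,a_3\})=0$. The two triples in $P(\{a_1,a_2,a_3\})$ containing $a_1$ are $\{a_1,a_2,a_3\}$ and $\{a_1,\bar a_2,\bar a_3\}$, both carrying value $+1$, which gives (\ref{eqgvec11}); the two containing $\bar a_1$ are $\{\bar a_1,\bar a_2,a_3\}$ and $\{\bar a_1,a_2,\bar a_3\}$, both carrying $-1$, which gives (\ref{eqgvec12}). For $s=a_2$ the containing triples are $\{a_1,a_2,a_3\}$ (value $+1$) and $\{\bar a_1,a_2,\bar a_3\}$ (value $-1$), which cancel; the same check works verbatim for $\bar a_2,a_3,\bar a_3$, yielding (\ref{eqgvec2}).

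The case $\tau(\{a_1,a_2,a_3\})=1$ is entirely analogous: the two triples containing $a_1$ are $\{a_1,a_2,\bar a_3\}$ and $\{a_1,\bar a_2,a_3\}$, both with $g=+1$, and the two containing $\bar a_1$ are $\{\bar a_1,a_2,a_3\}$ and $\{\bar a_1,\bar a_2,\bar a_3\}$, both with $g=-1$, giving (\ref{eqgvec11}) and (\ref{eqgvec12}); for each $s\in\{a_2,\bar a_2,a_3,\bar a_3\}$ the two triples containing $s$ receive opposite signs, so (\ref{eqgvec2}) follows. There is no real obstacle — the only thing to watch out for is not mixing the $\tau=0$ and $\tau=1$ sign conventions, so it is cleanest to arrange the verification as a $2\times 6$ table (one row per value of $\tau$, one column per point). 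The underlying structural fact that makes this work is that $g$ was deliberately defined so that $a_1$ and $\bar a_1$ appear only in triples of uniform sign, whereas each of $a_2,\bar a_2,a_3,\bar a_3$ appears in exactly one triple of each sign.
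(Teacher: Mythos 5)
Your proposal is correct and is precisely the direct calculation the paper's proof alludes to (the paper simply states ``the proof is obtained by direct calculations''), so the two approaches coincide; your explicit enumeration of the two triples of $P(\{a_1,a_2,a_3\})$ containing each point, in both the $\tau=0$ and $\tau=1$ cases, matches the paper's definitions of $P$ and $g$ and yields the claimed sums.
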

\begin{proof}
	The proof is obtained by direct calculations. 
\end{proof}
Clearly, by direct permutation of points $a_1$, $a_2$ and $a_3$ one may apply Proposition \ref{P:zero} for $a_2$ or $a_3$ instead of $a_1$.

\begin{lemma}\label{L:4covering} 
	Let $S$ be $STS(n)$, where $n\geq 49$. Then there is a function $h:S\rightarrow \{1,\ldots,n\}$ such that
	\begin{enumerate}
		\item for any $\{i,j,k\} \in S$, $h(\{i,j,k\})\in \{i,j,k\}$,
		\item for any $i\in \{1,\ldots,n\}$, $|\{T\in S: h(T)=i\}|\geq 4$.
	\end{enumerate}
		    
\end{lemma}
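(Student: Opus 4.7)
The plan is to reduce the lemma to a bipartite matching problem and invoke Hall's marriage theorem. Build a bipartite graph $G$ whose left part is $S$ and whose right part consists of $4n$ ``demand tokens''---four copies $(i,1),(i,2),(i,3),(i,4)$ of each point $i\in\{1,\ldots,n\}$---with edges joining a triple $T$ to each token $(i,r)$ satisfying $i\in T$. A matching of $G$ saturating the right part assigns four distinct triples to every point; defining $h(T)=i$ whenever $T$ is matched to some copy of $i$, and extending $h$ arbitrarily on each remaining unmatched triple by sending it to one of its points, yields a function $h$ satisfying both required properties.

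Since the four tokens at a given point share the same neighborhood in $G$, Hall's condition reduces to the following inequality for every subset $A\subseteq\{1,\ldots,n\}$:
\[
|\{T\in S:T\cap A\neq\emptyset\}|\geq 4|A|.
\]
I verify this by a routine counting argument, setting $k=|A|$ and $m=n-k$. If $m\geq 8$, use the STS property that each pair is in a unique block: there are exactly $km$ ``cross pairs'' (pairs with one endpoint in $A$ and one in $A^c$), and each triple meeting $A$ in one or two points covers exactly two such pairs, so the number of triples meeting $A$ is at least $km/2=k(n-k)/2\geq 4k$. If $m\leq 7$, bound instead the triples contained in $A^c$: each such triple uses three of the $\binom{m}{2}$ pairs in $A^c$, so their number is at most $m(m-1)/6$, and therefore
\[
|\{T\in S:T\cap A\neq\emptyset\}|\geq\frac{n(n-1)-m(m-1)}{6}=\frac{(n-m)(n+m-1)}{6}=\frac{k(n+m-1)}{6},
\]
which is $\geq 4k$ whenever $n+m-1\geq 24$. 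Since $m\geq 0$, this holds for any $n\geq 25$, in particular for $n\geq 49$.

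There is no single hard step: the proof is essentially Hall's theorem together with the classical STS identity that every pair lies in exactly one block. The only mildly delicate point is that the cross-pair bound $k(n-k)/2$ becomes too weak when $|A^c|$ is very small, which forces the split into two cases and the auxiliary bound on triples entirely contained in $A^c$. Because the threshold from the counting is $n\geq 25$, the stated hypothesis $n\geq 49$ leaves ample slack, and no extra combinatorial work should be required beyond the two counting estimates above.
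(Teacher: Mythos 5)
Your proof is correct, but it takes a genuinely different route from the paper. The paper counts: it considers all $3^{|S|}$ functions satisfying condition (1), bounds from above the number of them that assign a fixed point $i_0$ fewer than $4$ preimages (using that $i_0$ lies in exactly $\frac{n-1}{2}$ triples), applies a union bound over the $n$ points, and checks that the resulting expression is positive for $n\geq 49$ --- in effect a first-moment argument showing a random choice function works. You instead set up a bipartite graph with four demand tokens per point and verify Hall's condition, which (as you note) reduces to the single inequality $|\{T\in S: T\cap A\neq\emptyset\}|\geq 4|A|$ for all point sets $A$; your two counting estimates (the cross-pair count $k(n-k)/2$ via the unique-block-per-pair property when $|A^c|\geq 8$, and the complementary bound $\bigl(n(n-1)-m(m-1)\bigr)/6=k(n+m-1)/6$ when $|A^c|\leq 7$) are both correct, as is the reduction of Hall's condition from arbitrary token sets to point sets since tokens at the same point have identical neighborhoods. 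Your approach buys a sharper threshold ($n\geq 25$ rather than $n\geq 49$, leaving the stated hypothesis with slack) and is constructive via any matching algorithm, at the cost of slightly more case analysis; the paper's count is shorter to state but is tied to the weaker threshold and is purely existential.
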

\begin{proof}
	Consider the set $A$ of all functions satisfying the first condition from Lemma. Each function maps any triple $T\in S$ to a point in $T$. Therefore we have $|A|=3^{\frac{n(n-1)}{6}}$. The next step is counting the number of functions from $A$ not satisfying the second condition of Lemma for a fixed point $i_0$. Clearly, for any such function $f$, $|\{T\in S: h(T)=i_0\}|$ equals $0$, $1$, $2$ or $3$. We conclude that the number of such functions is equal to
	$$R=2^{\frac{n-1}{2}}3^{\frac{n(n-1)}{6}-\frac{n-1}{2}}+{\frac{n-1}{2} \choose 1}2^{\frac{n-1}{2}-1}3^{\frac{n(n-1)}{6}-\frac{n-1}{2}}+$$ $${\frac{n-1}{2} \choose 2}2^{\frac{n-1}{2}-2}3^{\frac{n(n-1)}{6}-\frac{n-1}{2}}+{\frac{n-1}{2} \choose 3}2^{\frac{n-1}{2}-3}3^{\frac{n(n-1)}{6}-\frac{n-1}{2}}.$$   
	Consequently, the number of functions in $A$ not satisfying the second condition from Lemma (in at least one point) is not greater than $nR$. As a result, we have that the number of functions from $A$ satisfying both conditions is at least $3^{\frac{n(n-1)}{6}}-nR$ which is equal to
	%$$2^{\frac{n-1}{2}}3^{\frac{n(n-1)}{6}-\frac{n-1}{2}}\bigl((\frac{3}{2})^{\frac{n-1}{2}}-n(1+\frac{1}{2}{\frac{n-1}{2} \choose 1}+\frac{1}{4}{\frac{n-1}{2} \choose 2}+\frac{1}{8}{\frac{n-1}{2} \choose 3}\bigr).$$    
	$$2^{\frac{n-1}{2}}3^{\frac{n(n-1)}{6}-\frac{n-1}{2}}\bigl((\frac{3}{2})^{\frac{n-1}{2}}-\frac{1}{384}n^4-\frac{1}{128}n^3-\frac{71}{384}n^2-\frac{103}{128}n\bigr).$$ 
	This expression is strictly positive for $n\geq 49$ and the proof is finished. 
\end{proof}

\begin{theorem}\label{T:AM} 
	Any Assmuss-Mattson Steiner triple system of order $N$, $N\geq 99$ admits a zero-sum $5$-flow.    
\end{theorem}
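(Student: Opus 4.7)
The plan is to construct $v$ by placing a constant value $\mu_T\in\mathbb Z$ on all four triples of each block $P(T)$ ($T\in S$) and an integer $v_i$ on each pole triple $\{i,\bar i,2n+1\}$. Each of the six points $a_1,\bar a_1,a_2,\bar a_2,a_3,\bar a_3$ of $P(T)$ lies in exactly two of its four triples (an easy case check, valid for both values of $\tau(T)$), so the contribution of the $P$-layer to the flux at both $i$ and $\bar i$ equals $2M_i$, where $M_i:=(W_S\mu)_i=\sum_{T\ni i}\mu_T$. Setting $v_i:=-2M_i$ therefore annihilates the flux simultaneously at $i$ and $\bar i$; the flux at the remaining point $2n+1$ becomes $\sum_iv_i=-6\sum_T\mu_T$, which vanishes exactly when $\sum_T\mu_T=0$.

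Thus Theorem~\ref{T:AM} reduces to producing an integer vector $\mu\in\mathbb Z^S$ satisfying: (i) $\mu_T\ne 0$ and $|\mu_T|\le 4$ for every $T\in S$; (ii) $M_i\in\{\pm 1,\pm 2\}$ for every $i\in\{1,\dots,n\}$ (so that each pole value $v_i=-2M_i$ is nonzero with $|v_i|\le 4$); and (iii) $\sum_T\mu_T=0$. Given such a $\mu$, the resulting $v$ is automatically nowhere zero, has $\|v\|_{\infty}\le 4$, and satisfies $W_{\overline S}v=0$ at every point of $\overline S$, so it is a zero-sum $5$-flow.

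To produce $\mu$, the idea is to invoke Lemma~\ref{L:4covering} to fix $h\colon S\to\{1,\dots,n\}$ with $c_i:=|h^{-1}(i)|\ge 4$ for every point, seed $\mu$ with $\pm 1$ values (first balancing signs within each fibre $h^{-1}(i)$), and then iteratively reduce the discrepancy of $W_S\mu$ by local sign-flips: each single flip $\mu_T\mapsto-\mu_T$ alters only the three $M_j$'s with $j\in T$ (by $\pm 2$ each), so careful iteration drives every $M_i$ into the window $\{\pm 1,\pm 2\}$, and the slack $c_i\ge 4$ ensures that no $M_i$ is forced to $0$ along the way; a final pair-swap (possibly upgrading a few $\mu_T$'s to $\pm 2$) then enforces $\sum_T\mu_T=0$ without destroying the local windows. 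The main obstacle is this last step: rigorously establishing that such a nowhere-zero short-range ``near-flow'' $\mu$ exists for an arbitrary Steiner triple system of order $n\ge 49$. This discrepancy-type argument leverages both the regularity of $S$ (each point in $(n-1)/2$ triples) and the slack from Lemma~\ref{L:4covering}, and is precisely why the hypothesis $N\ge 99$, i.e.\ $n\ge 49$, is needed.
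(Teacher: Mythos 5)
Your flux computation is correct: with a constant value $\mu_T$ on the four triples of $P(T)$, each of the six points of $T\cup\bar T$ lies in exactly two triples of $P(T)$, so the $P$-layer contributes $2M_i$ at $i$ and at $\bar i$, and the pole values $v_i=-2M_i$ together with $\sum_T\mu_T=0$ would indeed finish the proof. The problem is that the object you have reduced to --- a nowhere-zero integer vector $\mu$ with $|\mu_T|\le 4$, $\sum_T\mu_T=0$, and $W_S\mu$ \emph{nowhere zero} with $\|W_S\mu\|_\infty\le 2$ --- is not known to exist for an arbitrary $STS(n)$, and nothing in your sketch establishes it. Each point lies in $(n-1)/2$ triples, so $M_i$ is a sum of $(n-1)/2$ nonzero terms that you must force into the window $\{\pm1,\pm2\}$ simultaneously for all $n$ points; balancing signs inside the fibres $h^{-1}(i)$ controls only the $c_i\ge 4$ triples assigned to $i$ by $h$, not the remaining $(n-1)/2-c_i$ triples through $i$, and the local sign-flip iteration (each flip moving three of the $M_j$ by $\pm2\mu_T$) is exactly a discrepancy-minimization problem for which no standard argument gives discrepancy $\le 2$, let alone the extra constraint $M_i\ne 0$. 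In effect you have reduced the theorem to a statement at least as delicate as the bounded-flow problem for the base system $S$ itself --- precisely the assumption the theorem is designed to avoid.

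The paper escapes this by \emph{not} taking $v$ constant on $P(T)$ for most $T$. For each $T$ outside a small set $B\cup\{T_0\}$ it uses the antisymmetric pattern $g$ of Proposition \ref{P:zero}, whose sums over the two triples of $P(T)$ through a point vanish at two of the three point-pairs of $T$ and equal $\pm2$ only at the designated point $h(T)$; pairing $g$ with $-g$ inside each fibre $M_j$ (with a $-2g$ correction when $|M_j|$ is odd, which is where Lemma \ref{L:4covering} and hence $n\ge 49$ are used) makes the whole bulk of the construction contribute zero flux at every point. The only nonzero fluxes $\alpha_i\in\{\pm2,\pm4\}$ then come from the $(n+1)/2$ quadruples over $B\cup\{T_0\}$, which are trivial to control, and the pole triples absorb them. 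You should either adopt this non-constant assignment on the $P(T)$, or supply a genuine proof of the existence of your near-flow $\mu$; as it stands the proposal has a gap at its central step.
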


\begin{proof}
Let $\overline{S}$ of order $N=2n+1$ be obtained from a STS $S$ of order $n$, $n\geq 49$ by Assmuss-Mattson construction with a function $\tau:S\rightarrow \{0,1\}$.

	We start from the all-zero vector $v$, indexed by the triples of ${\bar S}$ and update it with the course of the proof. At the end of the proof, $v$ will be $5$-flow for STS ${\bar S}$.  	
	Consider the set $B=\{T\in S: 1\in T\}$ and $T_0\in S\setminus B$. Without loss of generality we assume that $B=\{\{1,2,3\}, \{1,4,5\}, \{1,6,7\}, \dots, \{1,n-1,n\}\}$ and $T_0=\{2,4,6\}$. The triples of ${\bar S}$ are parted into the following three sets of triples: $P(T_0)\cup \bigcup\limits_{T'\in B}P(T')$,  $\bigcup\limits_{T'\in S\setminus (T_0\cup B)}P(T')$ and $\bigcup\limits_{i=1}^{n}\{i,\bar{i},2n+1\}$.  
We consequently define the vector $v$ on these sets.

	Let us consider an auxiliary vector $w$ indexed by the triples from $ \{T_0\} \cup B$.
	For $i$, $1\leq i \leq \frac{n-1}{2}$, and even $\frac{n-1}{2}$, we define $w_{\{2,4,6\}}=-2$ and
	$$
		w_{\{1,2i,2i+1\}}=\begin{cases}			
		1, &1\leq i\leq \frac{n-1}{4}+1 \\
		-1,&\text{otherwise.}
	\end{cases} 	
	$$    
	And for $i$, $1\leq i \leq \frac{n-1}{2}$, and odd $\frac{n-1}{2}$, we define $w_{\{2,4,6\}}=-2$ and
	$$
	w_{\{1,2i,2i+1\}}=\begin{cases}			
		1, &1\leq i\leq \frac{n-3}{4} \\
		-1, &\frac{n+1}{4}\leq i\leq \frac{n-3}{2} \\
		2,&i=\frac{n-1}{2}.
	\end{cases} 	
	$$

The choice of $w$ implies that for any $i\in\{1,\ldots,n\}$

 \begin{equation}\label{w_proj}
\sum\limits_{T'\in \{T_0\}\cup B, i\in T' } w(T')\in\{\pm 1,\pm 2\},
\end{equation}
%!

\begin{equation}\label{w_sum}
\sum_{T\in  \{T_0\} \cup B} w_T=0.
\end{equation}

%!B_0\rightarrow B	

%	
%	\begin{center}
%		
%		\begin{tikzpicture}[thick, scale=1, every node/.style={scale=0.9}]
%			%Construction for 1/2
%			
%			%disctance between vetices in one line
%			\def \l {1} 
%			
%			%distance between w and v111
%			\def \d {3.3}
%			
%			%BLOCK 1
%			%angle from Axe x for Block 1
%			\def \alfabf {30}
%			
%			
%			\node[draw, circle] (w1) at (0, -2*\l) {$1$};
%			
%			
%			\node[draw, circle] (w2) at (0, \l) {$2$};
%			\node[draw, circle] (w3) at (\l, \l) {$3$};
%			
%			\node[draw, circle] (w4) at (0+2.5\l, \l) {$4$};
%			\node[draw, circle] (w5) at (3.5\l, \l) {$5$};
%			
%			
%			
%			
%		
%			\draw [fill=black!100, color=black!100] ( 4.0\l,\l ) circle (0.01cm);
%			\draw [fill=black!100, color=black!100] ( 4.25\l,\l ) circle (0.01cm);
%			\draw [fill=black!100, color=black!100] ( 4.5\l,\l ) circle (0.01cm);
%			
%			\node[draw, circle] (w6) at (5.0\l, \l) {$\frac{n+3}{2}$};
%			\node[draw, circle] (w7) at (6.0\l, \l) {$\frac{n+5}{2}$};
%			
%			\node[draw, circle] (w6) at (8.0\l, \l) {$\frac{n+7}{2}$};
%			\node[draw, circle] (w7) at (9.0\l, \l) {$\frac{n+9}{2}$};
%			
%			
%		
%			
%			\node[below,font=\large\bfseries] at (current bounding box.south) {Figure 2. Graph $L(k,m)$};
%			
%		\end{tikzpicture}
%	\end{center}
%	

	Let us now define the elements of $v$ on the triples  arising from $\{T_0\}\cup B$ in Assmuss-Mattson recursive approach. For $T\in \{T_0\}\cup B$,  put $v_{T'}=w_T$ for all $T'\in P(T)$.
	
	Define the numbers $\alpha_s=\sum\limits_{T'\in P(T_0)\cup \bigcup\limits_{T\in B}P(T), s\in T' }{v_{T'}}$ for $s\in \{1,\dots,n,\bar{1},\dots,\bar{n}\}$.
%!
 From the definition of $P(T)$ we see that for any $i\in T$  the points $i$ and $\bar{i}$ are in exactly two triples of $P(T)$.  Since $v(T')$ is the same for $T'\in P(T)$, we see that for $i\in\{1,\ldots,n\}$ we have $$\alpha_i=\alpha_{\bar{i}}= \sum\limits_{T'\in P(T_0)\cup \bigcup\limits_{T\in B}P(T),i\in T'}{v_{T'}}.$$
	 
Because for any $T'\in S$, $i\in T'$, the value of  $w_{T'}$ is doubled in $\alpha_i$ and doubled in $\alpha_{\bar{i}}$, using (\ref{w_proj}) we obtain that:
	\begin{equation}\label{E:e1}
		\alpha_i \in \{\pm 2,\pm 4\}.
	\end{equation}
The equality (\ref{w_sum}) gives the following (note that so far, some values of $v$ are still zeros):
	\begin{equation}\label{E:e2}
		\sum_{i \in \{1,\ldots,n\}}\alpha_i =0.
	\end{equation}	 
By Lemma \ref{L:4covering} there is a function $h$ defined on $S$ such that for any $\{i,j,k\} \in S$, $h(\{i,j,k\})\in \{i,j,k\}$, and for any $i\in \{1,\ldots,n\}$, $|\{T\in S: h(T)=i\}|\geq 4$. Take any point $j\in \{2,\ldots,n\}$.  
Clearly, $ \{T_0\}\cup B$ covers every point at most twice. Consequently,  $$|M_j|\geq 2,$$
where $M_j=\{T\in S\setminus(\{T_0\}\cup B): h(T)=j\}$.
  The next step is to apply Proposition \ref{P:zero}  in order to define $v$ on $P(T)$ and $P'(T)$ for $T\in M_j$. If the size of $M_j$ is even then we divide $M_j$ into two sets $M^1_j$ and $M^2_j$ of equal cardinality. After that we define  values of $v$ as follows. For a triple $T$ in $M_j^1$, judging by the value of $\tau$, we set the values $v_{T'}$ to be +1 and -1 for all triples in $T'\in P(T)$ as the values of vector $g$ in Proposition \ref{P:zero} with $a_1=j$. For a triple $T$ in $M_j^2$ we set the values $v_{T'}$ to be +1 and -1 for all triples in $T'\in P(T)$ as the values of vector $-g$ in Proposition \ref{P:zero} with $a_1=j$.

Due to definition of $v_{T'}$ from $g(T')$ we have:

$$\sum_{ T'\in P(T): j\in T',T \in M_j} v_{T'}=\sum_{T'\in P(T): j\in T', T \in M_j^1}g_{T'}-\sum_{T'\in P(T): j\in T, T \in M_j^2}g_{T'}=$$ 
$$\\ \sum_{T \in M_j^1}\sum_{T'\in P(T): j\in T'}g(T')-\sum_{T \in M_j^2,}\sum_{T'\in P(T): j\in T'}g(T');$$

Taking into account equality (\ref{eqgvec11}) and because $|M_j^1|=|M^2_j|$, we see that $$\sum_{T \in M_j^1}\sum_{T'\in P(T): j\in T'}g(T')-\sum_{T \in M_j^2,}\sum_{T'\in P(T): j\in T'}g(T')=\sum_{T \in M_j^1}2-\sum_{T \in M_j^2}2=0.$$

The same holds for the point $\bar{j}$ as we use (\ref{eqgvec12}) to obtain the following:

$$\sum_{ T'\in P(T): \bar{j}\in T',T \in M_j} v_{T'}=\\ \sum_{T \in M_j^1}\sum_{T'\in P(T): \bar{j}\in T'}g(T')-\sum_{T \in M_j^2,}\sum_{T'\in P(T): \bar{j}\in T'}g(T')=$$
$$\sum_{T \in M_j^1}-2+\sum_{T \in M_j^2}2=0.$$

Thus, we have: 
\begin{equation}\label{eq11}
\sum_{T': T'\in P(T), T \in M_j, j\in T'} v_{T'}=\sum_{T': T'\in P(T), T \in M_j, \bar{j}\in T'} v_{T'}=0. 
\end{equation}

Moreover, according to (\ref{eqgvec2}) we see that for the "projection" of any point different from $j$  is zero. For any $s\in T\cup \{\bar{i}:i \in T\}\setminus \{j,\bar{j}\}$ we have that:
\begin{equation}\label{eq12}
\sum_{T': T'\in P(T), T \in M_j,  \bar{s}\in T'} v_{T'}=\sum_{T': T'\in P(T), T \in M_j,  \bar{s}\in T'} g_{T'}=0.
\end{equation}

In the case of odd size of the set $M_j$, we divide $M_j$ into three non-intersecting subsets $M_j^1$, $M_j^2$ and $M_j^3$ respectively of sizes $\frac{|M_j|+1}{2}$, $\frac{|M_j|-3}{2}$ and $1$. 
We repeat the procedure (as for the case when $M_j$ was of even size) for the first two sets with vectors $g$ and $-g$ correspondingly. For the last one-element set we do the same but with the vector $-2g$. Similarly to the case of even size of $M_j$, we have equalities (\ref{eq11}) and (\ref{eq12}). We repeat the arguments for sets $M_t$ for all remaining $t\in \{1,\ldots,n\}\setminus \{1,j\}$.  

So far we have defined the elements of v indexed by all triples in ${\bar S}\setminus \{\{i,\bar{i},2n+1\}, i=1,\ldots,n\}$ and they are nonzeros. 
Moreover, from (\ref{eq11}), (\ref{eq12}) and (\ref{E:e1}) for any $i\in\{1,\ldots,n\}$

\begin{equation}\label{z0}\sum_{T'\in {\bar S}\setminus \{\{i,\bar{i},2n+1\}, i=1,\ldots,n\},i\in {\bar S}} v_{T'}= 
\sum_{T'\in {\bar S}\setminus \{\{i,\bar{i},2n+1\}, i=1,\ldots,n\},\bar{i}\in {\bar S}} v_{T'}=\alpha_i\in \{\pm 2,\pm 4\}.\end{equation}

%$$f(T)= 0 \text{ for } T\in \bigcup_{l=1,2,\dots, n}{\{l,\bar{l},2n+1\}},$$
%and for all $i \in [n]$ we have 

%$$	\alpha_i \in \{\pm 2,\pm 4\}, $$

%$$	\sum_{i \in [n]}\alpha_i =0, $$
%where  $\alpha_i=\sum_{T\in {\bar S}, i \in T}{f(T)} =\sum_{T\in {\bar S}, \bar{i} \in T}{f(T)}=\alpha_{\bar{i}}$.

%Since all our triples did not contain the element $2n+1$, we also have that $\sum_{T\in {\bar S}, 2n+1 \in T}{f(T)}=0$.
 
The last step that finishes the proof is to define $v$ on the set of triples $$\bigcup\limits_{i=1,2,\dots, n}{\{i,\bar{i},2n+1\}}$$
in the following way:
\begin{equation}\label{z1}v_{\{i,\bar{i},2n+1\}}=-\alpha_i \text{ for } i\in \{1,\ldots,n\}.\end{equation}
From (\ref{E:e2}) we see that \begin{equation}\label{z2}\sum\limits_{\{i,\bar{i},2n+1\},l=1,\ldots,n}v_{\{i,\bar{i},2n+1\}}=\sum\limits_{i=1}^{n}\alpha_i=0.\end{equation}

Summing up the above we have the following.

1. All values of $v$  are nonzeros with absolute values not greater than $4$.	

2. From (\ref{z0}) and (\ref{z1}), for any $i\in\{1,\ldots,n\}$  $\sum\limits_{T'\in {\bar S},i\in T'} v_{T'}=\sum\limits_{T'\in {\bar S},{\bar i}\in T'} v_{T'}=0$.
       
3. From (\ref{z2}) the equality $\sum\limits_{T'\in {\bar S},2n+1\in T'} v_{T'}=0$ holds.

In other words, $v$ is a $5$-flow for ${\bar S}$.
\end{proof}

\section{Completely regular codes in the block graphs of STSs}
Let $S$ be a Steiner triple system in the classical sense of order $n$.
The following are examples of completely regular codes in the block graph of $S$. 

 {\it Covering radius $\rho=1$ and eigenvalue }$\theta_1(\Gamma)$:
 
Construction 1. $\{B\in S: i \in B\}$, where $i$ is any fixed point  $\{1,\ldots,n\}$.

Construction 2. Any Steiner subsystem of $S$ having order $\frac{n-1}{2}$. 

Construction 3. $\{B\in S: i \in B\} \cup S'$, where $S'$ is any Steiner subsystem of $S$ having order $\frac{n-1}{2}$, such that $i$ is a point of $S$ but not a point of $S'$.

{\it Covering radius $\rho=1$ and eigenvalue} $\theta_2(\Gamma)$:

Construction 4 (see Proposition \ref{CRCeg2}). Any $1$-subdesign of $S$.
 
{\it Covering radius} $\rho=2$:

Construction 5. Any Steiner subsystem of $S$ of order less then $\frac{n-1}{2}$. 

{\bf Remark 2.} Actually, Construction 5 lists all completely regular codes in the block graphs with $\rho=2$. This can be shown, for example, using the technique from \cite[Theorem 4 and Lemma 2]{MogVor} that
utilizes the fact that all such codes naturally arise from subsets of the vertices of the clique graph of the block graph. This is beyond the scope of the current study, so we skip the details here.

The block graph of the projective (Hamming) Steiner triple system of order $2^r-1$ is isomorphic to the Grassmann graph $J_2(r,2)$. 
The completely regular codes with $\rho=1$ and the first eigenvalue in these graphs are known as Cameron-Liebler line classes.
These objects were characterized in \cite{Drudge} as follows: these  are Constructions 1-3 or their opposite codes.  
Judging by this fact for the most "symmetric"\,Steiner triple system, we propose the following:

{\bf Problem 2.} Find any other completely regular codes with $\rho=1$ and the first eigenvalue in the block graphs of Steiner triple systems of order $n, n\geq 13$ or prove that no
such codes exist.

All Steiner triple systems of orders $13$ and $15$ are enumerated and there are $2$ and $80$ isomorphism classes of such Steiner triple systems respectively \cite{CCW}.

\begin{theorem}
Let $S$ be a Steiner triple system of order $13$ or $15$.
Then all completely regular codes with $\rho=1$ in $\Gamma_S$ and eigenvalue $\theta_1(\Gamma_S)$ are codes from Constructions 1-3.
\end{theorem}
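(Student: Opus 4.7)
The plan is to reduce the classification of such CRCs to a classification of two-valued $\theta_1(\Gamma_S)$-eigenvectors, restrict to finitely many admissible parameter pairs by elementary counting, and then verify the claim by an orbit-based computer search over the $2$ STS(13) and $80$ STS(15) enumerated in \cite{CCW}. Here ``Constructions~1--3'' must be read as including set-theoretic complements, since these are automatically CRCs with $\rho=1$ and the same non-trivial eigenvalue.

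By Proposition~\ref{2values}, any CRC $C$ with $\rho=1$ and eigenvalue $\theta_1(\Gamma_S)$ yields, up to scaling, the two-valued $\theta_1$-eigenvector $\beta_0\chi_C-\gamma_1\chi_{V(\Gamma_S)\setminus C}$, and conversely every such two-valued eigenvector determines a unique CRC. By Theorem~\ref{TFirsteigen}.3 the $\theta_1(\Gamma_S)$-eigenspace is precisely $\{W_S^Tu:u\in\mathbb{R}^n,\,\sum_{i=1}^nu_i=0\}$, so the problem reduces to finding all zero-sum vectors $u\in\mathbb{R}^n$ whose triple sums $u_i+u_j+u_k$, over $\{i,j,k\}\in S$, take only two distinct values. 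The codes from Constructions~1, 2, 3 correspond, respectively, to $u$ supported at a single point, to $u$ constant on the points of a Steiner subsystem of order $(n-1)/2$ and constant on its complement, and to a superposition of the two.

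Next I would bound the admissible parameters. From $\alpha_0+\beta_0=3(n-3)/2$, $\alpha_0-\gamma_1=\theta_1(\Gamma_S)=(n-9)/2$, and the edge-count identity $|C|\beta_0=(|S|-|C|)\gamma_1$, one derives $|C|=2\gamma_1$ for $n=13$ and $|C|=7\gamma_1/3$ for $n=15$. Integrality then leaves the candidate sizes $|C|\in\{2,4,\ldots,24\}$ for $n=13$ and $(|C|,\gamma_1)\in\{(7,3),(14,6),(21,9),(28,12)\}$ for $n=15$. Constructions~1--3 together with their complements realise $|C|\in\{6,20\}$ for $n=13$ and all four admissible sizes for $n=15$; the remaining task is to show no CRC of any other admissible size exists for $n=13$, and that Constructions~1--3 exhaust the CRCs of the four admissible sizes for $n=15$.

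The final step is an orbit-based exhaustive search: for each $S$ in the two lists and each admissible pair $(|C|,\gamma_1)$ I would enumerate representatives of $|C|$-subsets of blocks modulo $\mathrm{Aut}(S)$, verify the defining equations $|N(B)\cap C|=\alpha_0$ for $B\in C$ and $|N(B)\cap C|=\gamma_1$ for $B\notin C$, and match every survivor against Constructions~1--3 or a complement. The hard part will be the intermediate sizes for $n=15$, where $\binom{35}{14}\sim4.5\cdot10^9$ precludes brute force; I would handle this by a backtracking that adds blocks one at a time and prunes as soon as a partial neighborhood count exceeds $\alpha_0$ or $\gamma_1$, combined with orbit reduction under $\mathrm{Aut}(S)$, which is especially effective for the projective STS(13) with $\mathrm{Aut}=\mathrm{PGL}(3,3)$ and for the more symmetric members of the 80 STS(15).
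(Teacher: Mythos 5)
Your proposal is sound and, like the paper's argument, is ultimately a computer-assisted verification, but it organizes the computation differently. The paper never matches individual codes against Constructions 1--3: it \emph{counts} the codes produced by the constructions via the formula $n+(2^{n-r}-1)\frac{n+3}{2}$, which rests on the theorem of Doyen--Hubaut--Vandensavel \cite{Douen} that the number of subsystems of order $\frac{n-1}{2}$ equals $2^{n-r}-1$ for $r$ the binary rank of $S$, and then runs the integer-programming enumeration of \cite{Mog} to count \emph{all} completely regular codes with the relevant intersection arrays, concluding by comparing the two numbers. You instead derive the admissible intersection arrays by elementary double counting (your computations $|C|=2\gamma_1$ for $n=13$ and $|C|=\tfrac{7}{3}\gamma_1$ for $n=15$ are correct, as is the list of candidate sizes) and propose a direct backtracking/orbit enumeration followed by explicit matching of each survivor. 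Your route is more self-contained (no rank formula needed) at the price of a heavier search; the paper's route replaces matching by a cardinality comparison at the price of importing \cite{Douen} and trusting an ILP enumeration. Two remarks. First, your observation about complements is a genuine clarification the paper elides: for $n=13$ the complement of a point pencil is a completely regular code with $\rho=1$, eigenvalue $\theta_1$ and $20$ blocks, which is not literally one of Constructions 1--3, so the statement must indeed be read up to complementation (equivalently, up to the induced distance partition), and your candidate-size analysis correctly reflects this. Second, for the large admissible sizes ($|C|=21,28$ at $n=15$) you should search for the complementary code of size $14$ or $7$ rather than the code itself; you gesture at this via complements but it is worth making explicit, since otherwise the feasibility of the backtracking at $|C|=28$ is not obvious. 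With those two points made precise, your argument establishes the same result.
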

\begin{proof}
For a given Steiner triple system of order $n$, the number of codes from Construction 2 equals the number of Steiner subsystems of order $\frac{n-1}{2}$.
Using a well-known result of \cite{Douen} the number of such subsystems equals the $2^{n-r}-1$, where $r$ is the binary rank of Steiner triple system. We recall that the rank is the dimension of the subspace, spanned by the characteristic vectors of triples in the point set. We  see that there are exactly $(2^{n-r}-1)$ and $(2^{n-r}-1)\frac{n+1}{2}$ codes given by Constructions 2 and 3 respectively. 

We conclude that for a given Steiner triple system, the number of codes from Constructions 1, 2, 3 are as follows:
\begin{equation}\label{numconstr}n+ (2^{n-r}-1)(\frac{n+3}{2}).\end{equation} 

We use integer based computer search for completely regular codes with $\rho=1$, described in \cite{Mog}.
Given the intersection array, a computer linear programming solver outputs the number of completely regular codes having this intersection array.
For all considered Steiner triple systems of orders $n=13$ and $15$ of any given rank $r$, the solver output the number of completely regular codes with eigenvalue $\theta_1(\Gamma_S)$ being equal to (\ref{numconstr}), thus we have the required.
 \end{proof}

{\bf Acknowledgement.} The authors would like to thank the referee for careful reading and useful remarks that improved the representation of the paper.

\end{document}